\title{New Bounds for the Acyclic Chromatic Index}
\date{}
\author{Anton Bernshteyn\thanks{Supported by the Illinois Distinguished Fellowship.}\\ University of Illinois at Urbana-Champaign}
\newtheorem{theo}{Theorem}
\newtheorem{lemma}[theo]{Lemma}
\newtheorem{corl}[theo]{Corollary}
\newtheorem{conj}[theo]{Conjecture}
\theoremstyle{definition}
\newtheorem{defn}[theo]{Definition}
\theoremstyle{remark}
\newtheorem{remk}[theo]{Remark}
\begin{document}
		\maketitle
		
		\begin{abstract}
			An edge coloring of a graph $G$ is called an \emph{acyclic edge coloring} if it is proper and every cycle in $G$ contains edges of at least three different colors. The least number of colors needed for an acyclic edge coloring of $G$ is called the \emph{acyclic chromatic index} of $G$ and is denoted by $a'(G)$. Fiam\v{c}ik \cite{Fiamcik} and independently Alon, Sudakov, and Zaks \cite{Alon2} conjectured that $a'(G) \leq \Delta(G)+2$, where $\Delta(G)$ denotes the maximum degree of $G$. The best known general bound is $a'(G)\leq 4(\Delta(G)-1)$ due to Esperet and Parreau \cite{Esperet}. We apply a generalization of the Lov\'{a}sz Local Lemma to show that if $G$ contains no copy of a given bipartite graph $H$, then $a'(G) \leq 3\Delta(G)+o(\Delta(G))$. Moreover, for every $\varepsilon>0$, there exists a constant $c$ such that if $g(G)\geq c$, then $a'(G)\leq(2+\varepsilon)\Delta(G)+o(\Delta(G))$, where $g(G)$ denotes the girth of $G$.
		\end{abstract}
		
		\section{Introduction}
		
		All graphs considered here, unless indicated otherwise, are finite, undirected, and simple. An edge coloring of a graph $G$ is called an \emph{acyclic edge coloring} if it is proper (i.e., adjacent edges receive different colors) and every cycle in $G$ contains edges of at least three different colors (there are no \emph{bichromatic cycles} in $G$). The least number of colors needed for an acyclic edge coloring of $G$ is called the \emph{acyclic chromatic index} of $G$ and is denoted by $a'(G)$. The notion of acyclic (vertex) coloring was first introduced by Gr\"{u}nbaum \cite{Grunbaum}. The edge version was first considered by Fiam\v{c}ik \cite{Fiamcik}, and independently by Alon, McDiarmid, and Reed \cite{Alon1}.
		
		As for many other graph parameters, it is quite natural to ask for an upper bound on the acyclic chromatic index of a graph $G$ in terms of its maximum degree $\Delta(G)$. Since $a'(G)\geq \chi'(G) \geq \Delta(G)$, where $\chi'(G)$ denotes the ordinary chromatic index of $G$, this bound must be at least linear in $\Delta(G)$. The first linear bound was given by Alon et al. \cite{Alon1}, who showed that $a'(G)\leq 64 \Delta(G)$. Although it resolved the problem of determining the order of growth of $a'(G)$ in terms of $\Delta(G)$, it was conjectured that the sharp bound should be lower.
		
		\begin{conj}[Fiam\v{c}ik \cite{Fiamcik}; Alon, Sudakov, Zaks \cite{Alon2}]\label{conj:AECC}
			For every graph $G$, $a'(G) \leq \Delta(G)+2$.
		\end{conj}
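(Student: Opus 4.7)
The plan is to attack the conjecture via a refined randomized coloring argument, since purely algebraic or constructive techniques have given much weaker bounds. With exactly $\Delta+2$ colors a proper edge coloring exists by Vizing's theorem, so the entire challenge is to rule out bichromatic cycles. I would first attempt a Moser--Tardos / entropy-compression style analysis: order the edges arbitrarily and sample colors one by one from some carefully chosen distribution on the set of colors not forbidden by the adjacency of previously colored edges, resampling whenever a bichromatic cycle through the most recent edge is created. The bad events are (i) two adjacent edges receiving the same color and (ii) the existence of a bichromatic cycle, and the goal is to show that the expected number of resamples at $\Delta+2$ colors is finite.

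Before running the analysis, I would reformulate the problem so that the dependency structure is as sparse as possible. Following the Esperet--Parreau framework, I would only track bichromatic cycles whose \emph{last colored edge} is the one currently being sampled, because all earlier bichromatic cycles have already been resolved. For a cycle of length $2k$ through a fixed edge $e$ there are at most $\Delta^{2k-2}$ choices, and the probability it becomes bichromatic once $e$ is colored is on the order of $(\Delta+2)^{-(2k-2)}$; the key step is therefore to beat the resulting product of roughly $1$ by exploiting correlations across cycles of different lengths and by using a non-uniform sampling distribution that slightly biases against colors already appearing on the current $2$-neighborhood of $e$. Combined with an inductive hypothesis that partial colorings extend with positive probability, this would give a Moser--Tardos-like witness tree bound of finite expected size.

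The main obstacle, and the reason the conjecture has resisted a complete proof for decades, is precisely the $\Delta^{2k-2}\cdot (\Delta+2)^{-(2k-2)} \approx 1$ phenomenon: at $\Delta+2$ colors there is essentially no probabilistic slack to absorb the union bound over all potential bichromatic cycles, so any proof must gain a nontrivial savings from either (a) a clever deterministic reduction that removes most cycles from consideration, or (b) a Kempe-chain recoloring step that fixes a bichromatic cycle locally without creating a cascade of new ones elsewhere. Controlling such cascades with only two spare colors is the genuine difficulty. As a fallback I would settle for the conditional versions pursued in the remainder of this paper --- $H$-free graphs and graphs of large girth --- where the count $\Delta^{2k-2}$ of cycles can be provably reduced and the entropy-compression calculation closes with a factor $3\Delta$ or $(2+\varepsilon)\Delta$ rather than $\Delta+2$.
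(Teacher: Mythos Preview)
The statement you were asked to prove is Conjecture~\ref{conj:AECC}, and the paper does \emph{not} prove it; it is presented as an open problem, and the paper's contribution is the pair of weaker conditional results (Theorems~\ref{theo:no4} and~\ref{theo:bigg}). So there is no ``paper's own proof'' to compare against.

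Your write-up is not a proof either, and to your credit you say so explicitly. You correctly isolate the central obstruction: with only $\Delta+2$ colors the crude count of length-$2k$ cycles through a fixed edge is $\Theta(\Delta^{2k-2})$ while the probability that such a cycle is bichromatic is $\Theta((\Delta+2)^{-(2k-2)})$, so the expected number of bad events per edge is bounded away from zero and neither the Lov\'asz Local Lemma, the Local Cut Lemma, nor a vanilla Moser--Tardos/entropy-compression analysis can close. The two escape routes you propose --- a biased sampling distribution and a Kempe-chain repair step --- are reasonable heuristics, but you give no argument that either actually recovers the missing factor; in particular, biasing away from colors in the $2$-neighborhood of $e$ costs you probability mass on the properness constraint and does not obviously gain a factor that is summable over all cycle lengths, and controlling Kempe-chain cascades with two spare colors is exactly the unresolved difficulty. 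In short, the proposal is an honest identification of why the conjecture is hard rather than a proof, and your fallback to the $H$-free and large-girth settings is precisely what the paper itself does.
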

		
		Note that the bound in Conjecture \ref{conj:AECC} is only one more than Vizing's bound on the chromatic index of $G$. However, this elegant conjecture is still far from being proven.
		
		The first major improvement to the bound $a'(G)\leq 64 \Delta(G)$ was made by Molloy and Reed \cite{Molloy}, who proved that $a'(G) \leq 16 \Delta(G)$. This bound remained the best for a while, until Ndreca, Procacci, and Scoppola \cite{Ndreca} managed to improve it to $a'(G)\leq \left\lceil 9.62(\Delta(G)-1)\right\rceil$. This estimate was recently lowered further to $a'(G)\leq 4(\Delta(G)-1)$ by Esperet and Parreau \cite{Esperet}.
		
		All the bounds mentioned above were derived using probabilistic arguments, and recent progress was stimulated by discovering more sophisticated and powerful analogues of the Lov\'{a}sz Local Lemma, namely the stronger version of the LLL due to Bissacot, Fern\'{a}ndez, Procacci, and Scoppola \cite{Bissacot} and the entropy compression method of Moser and Tardos \cite{Moser}.
		
		The probability that a cycle would become bichromatic in a random coloring is less if the cycle is longer. Thus, it should be easier to establish better bounds on the acyclic chromatic index for graphs with high enough girth. Indeed, Alon et al. \cite{Alon2} showed that if $g(G) \geq c_1 \Delta(G) \log\Delta(G)$, where $c_1$ is some universal constant, then $a'(G) \leq \Delta(G)+2$. They also proved that if $g(G)\geq c_2 \log\Delta(G)$, then $a'(G) \leq 2\Delta(G)+2$. This was lately improved by Muthu, Narayanan, and Subramanian \cite{Muthu} in the following way: For every $\varepsilon > 0$, there exists a constant $c$ such that if $g(G)\geq c \log \Delta(G)$, then $a'(G)\leq (1+\varepsilon)\Delta(G)+o(\Delta(G))$.
		
		We are now turning to the case when $g(G)$ is bounded below by some constant independent of $\Delta(G)$, which will be the main topic of this paper. The first bounds of such type were given by Muthu et al. \cite{Muthu}, who proved that $a'(G) \leq 9\Delta(G)$ if $g(G)\geq 9$, and $a'(G) \leq 4.52\Delta(G)$ if $g(G)\geq 220$. Esperet and Parreau \cite{Esperet} not only improved both these estimates even in the case of arbitrary $g(G)$, but they also showed that $a'(G) \leq \left\lceil 3.74(\Delta(G)-1)\right\rceil$ if $g(G)\geq 7$, $a'(G) \leq \left\lceil 3.14(\Delta(G)-1)\right\rceil$ if $g(G)\geq 53$, and, in fact, for every $\varepsilon > 0$, there exists a constant $c$ such that if $g(G)\geq c$, then $a'(G) \leq (3+\varepsilon)\Delta(G) + o(\Delta(G))$.
		
		The result that we present here consists in further improvement of the latter bounds. Namely, we establish the following.
		
		\begin{theo}\label{theo:no4}
			Let $G$ be a graph with maximum degree $\Delta$ and let $H$ be some bipartite graph. If $G$ does not contain $H$ as a subgraph, then $a'(G)\leq 3\Delta + o(\Delta)$.
		\end{theo}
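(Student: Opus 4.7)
The plan is to produce an acyclic edge coloring by applying the strengthened Lov\'asz Local Lemma of Bissacot, Fern\'andez, Procacci, and Scoppola \cite{Bissacot} to a random edge coloring of $G$ in which each edge independently receives a uniformly chosen color from a palette of $k = 3\Delta + f(\Delta)$ colors, with $f(\Delta) = o(\Delta)$ to be optimized at the end. The bad events are of two standard types: for each pair of adjacent edges, the event that they share a color (violating properness); and for each even cycle $C$ in $G$, the event $B_C$ that $C$ becomes bichromatic. Ruling out all of these events is exactly an acyclic edge coloring.

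For each cycle $C$ of length $2\ell$ one has $\Pr[B_C] \leq k(k-1)/k^{2\ell} \leq 1/k^{2\ell - 2}$. I would assign weights $x_{B_C} = \beta^{2\ell-2}/k^{2\ell-2}$ for a small constant $\beta>0$. After routine manipulations in the spirit of Esperet and Parreau \cite{Esperet}, the sufficient condition of the strengthened LLL reduces to a per-edge inequality dominated by
\[
S(e) \;=\; \sum_{\ell \geq 2} N_{2\ell}(e) \left(\frac{2\beta \Delta}{k}\right)^{2\ell-2},
\]
where $N_{2\ell}(e)$ denotes the number of cycles of length $2\ell$ passing through $e$. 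The crude estimate $N_{2\ell}(e) \leq \Delta^{2\ell-2}$ gives a geometric series controlled only when $k \gtrsim (3+\varepsilon)\Delta$, reproducing the previous threshold; shaving the $\varepsilon$ is precisely the task at hand.

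This is where the $H$-free hypothesis enters. Since $H$ is bipartite, $H$ embeds into $K_{s,s}$ for some $s = s(H)$, so $G$ is $K_{s,s}$-free. A K\H{o}v\'ari--S\'os--Tur\'an-type counting argument yields, for each fixed $\ell$, a bound of the form $N_{2\ell}(e) = O(\Delta^{2\ell-2-\delta_\ell})$ with $\delta_\ell = \delta_\ell(s) > 0$. I would split the sum $S(e)$ at a threshold $\ell_0 = \ell_0(\Delta)$ that grows slowly with $\Delta$: for $\ell < \ell_0$, the $H$-free savings push each term to $o(1)$, and their aggregate can be absorbed into the lower-order term $f(\Delta)$; for $\ell \geq \ell_0$, the trivial bound together with $2\beta\Delta/k$ slightly less than $1$ controls the tail geometrically. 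Tuning $\beta$, $\ell_0$, and $f$ simultaneously should then close the LLL at $k = 3\Delta + o(\Delta)$.

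The main obstacle I foresee is that the K\H{o}v\'ari--S\'os--Tur\'an machinery naturally delivers bounds on $\sum_e N_{2\ell}(e)$, whereas the LLL condition must hold edge by edge: a single edge lying in exceptionally many short cycles could spoil $S(e)$. I would address this by a preliminary \emph{cleaning} step that isolates the (necessarily few) edges incident to unusually dense neighborhoods and either removes them from $G$ and recolors them in a second pass, or upgrades the weights $x_{B_C}$ to depend on the local short-cycle count of $C$ so that the strengthened LLL can tolerate the nonuniformity. Carrying out this bookkeeping while preserving the leading constant $3$ is the central technical point.
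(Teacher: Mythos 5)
Your high-level strategy --- color the edges uniformly at random, treat properness violations and bichromatic even cycles as the bad events, and use K\H{o}v\'ari--S\'os--Tur\'an to show that the $H$-free hypothesis depresses the cycle counts enough to improve the constant --- is exactly the paper's. The genuine gap is in your choice of probabilistic tool. The cluster-expansion LLL of Bissacot et al.\ cannot reach the constant $3$, because the properness events alone already block it: each event ``adjacent edges $e,e'$ receive the same color'' has probability $1/k$, and its dependency neighborhood is the union of two cliques of size about $2\Delta$ (the events sharing the variable $e$ and those sharing $e'$), so the cluster-expansion condition reads $\mu \geq \frac{1}{k}\left(1+2\Delta\mu\right)^{2}$, which is solvable only for $k \geq (8+o(1))\Delta$. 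This is precisely why Ndreca, Procacci, and Scoppola obtained only $\lceil 9.62(\Delta-1)\rceil$ with that lemma, and why the later improvements to $4(\Delta-1)$ and below required entropy compression (Esperet--Parreau) or, in this paper, the Local Cut Lemma. In the LCL set-up a properness violation at $e$ costs only one edge of the hypercube digraph and contributes the additive term $2\omega/(2+c)$, so the condition degenerates to $c \geq 1/y + (\text{cycle terms})$ with $y = \omega/(2+c)$; when the cycle terms are $o(1)$ one may take $y$ close to $1$ and $c$ close to $1$, landing at $(2+1)\Delta = 3\Delta + o(\Delta)$. Your ``routine manipulations in the spirit of Esperet and Parreau'' are entropy-compression manipulations, not LLL ones; with the BFPS lemma the threshold you would reproduce is about $9.62\Delta$, not $(3+\varepsilon)\Delta$, and no tuning of $\beta$, $\ell_0$, or $f$ recovers the factor lost on properness.

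The obstacle you flag at the end --- that K\H{o}v\'ari--S\'os--Tur\'an only controls $\sum_e N_{2\ell}(e)$, so a single edge might lie on too many short cycles --- is not actually present, and the proposed cleaning/recoloring pass is both unnecessary and too vague to carry out as stated. Apply the theorem locally: for an edge $e=uv$, every $uv$-path of length $3$ uses an edge of $G[N(u)\cup N(v)]$, a graph on at most $2\Delta$ vertices containing no copy of $H$, hence having $O(\Delta^{2-\delta})$ edges. Thus \emph{every} pair $u,v$ is joined by $O(\Delta^{2-\delta})$ paths of length $3$, and splitting a cycle of length $k$ through $uv$ into a path of length $k-4$ from $u$ followed by a path of length $3$ into $v$ gives the pointwise bound $N_k(e)=O(\Delta^{k-2-\delta})$ for every edge $e$ and every $k\geq 4$, with $\delta$ independent of $k$. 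This uniform saving also makes your split at $\ell_0$ unnecessary: the entire cycle sum is $O(\Delta^{-\delta})\cdot y^2/(1-y^2)=o(1)$ for any fixed $y<1$.
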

		\begin{remk}
			In our original version of Theorem \ref{theo:no4} we considered only the case where $H$ was the $4$-cycle. That almost the same proof in fact works for any bipartite $H$ was observed by Esperet and de Verclos.
		\end{remk}
		\begin{remk}
			The function $o(\Delta)$ in the statement of Theorem \ref{theo:no4} depends on $H$. In fact, our proof shows that for the complete bipartite graph $K_{k,k}$, it is of the order $O\left(\Delta^{1-1/2k}\right)$.
		\end{remk}
		\begin{theo}\label{theo:bigg}
			For every $\varepsilon > 0$, there exists a constant $c$ such that for every graph $G$ with maximum degree $\Delta$ and $g(G) \geq c$, we have $a'(G) \leq (2+\varepsilon)\Delta + o(\Delta)$.
		\end{theo}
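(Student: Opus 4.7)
The plan is to apply the strengthened Lov\'{a}sz Local Lemma of Bissacot, Fern\'{a}ndez, Procacci, and Scoppola \cite{Bissacot} to a uniform random edge coloring, following the template used by Esperet and Parreau \cite{Esperet} for their $(3+\varepsilon)\Delta$ bound under large girth, but with a sharper verification made possible by the absence of short cycles. Fix $\varepsilon>0$, set $k = \lceil (2+\varepsilon)\Delta \rceil + s(\Delta)$ where $s(\Delta) = o(\Delta)$ is a slack term to be chosen at the end, and color each edge of $G$ independently and uniformly with a color from $[k]$. Two families of bad events are introduced: for each pair of adjacent edges $\{e,f\}$, the event $A_{e,f}$ that $e$ and $f$ receive the same color, with $\Pr(A_{e,f}) = 1/k$; and for each cycle $C$ in $G$ of (necessarily even) length $m \geq g(G)$, the event $B_C$ that the edges of $C$ alternate properly between two colors, with $\Pr(B_C) = k(k-1)/k^{m} \leq k^{2-m}$. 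Avoiding every bad event yields a proper acyclic edge coloring with $k$ colors, hence $a'(G) \leq k$.

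Two bad events are dependent iff their edge sets share an edge. A given edge of $G$ lies in at most $2(\Delta-1)$ type-$A$ events, and in at most $2m\Delta^{m-2}$ cycles of length $m$ (counting paths of length $m-1$ between its endpoints). Applying the strengthened LLL with weights $x_{A_{e,f}} = \alpha/k$ and $x_{B_C} = \beta k^{2-m_C}$ for constants $\alpha, \beta$ slightly greater than $1$, the sufficient condition reduces to a pair of inequalities, one per event type. In either inequality, the contribution of dependent $B$-events is controlled by the tail sum
\[
\sum_{m \geq g(G)} m\left(\frac{\Delta}{k}\right)^{m-2},
\]
a geometric series with ratio $\Delta/k = (2+\varepsilon)^{-1}(1+o(1)) < 1/2$; taking $g(G) \geq c = c(\varepsilon)$ large enough makes this quantity smaller than any prescribed constant.

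With the cycle contributions suppressed by the girth, the remaining work is to verify the adjacency part of each inequality, which demands $k \gtrsim 2\Delta$---a threshold matched by our choice of $k$, with the $\varepsilon\Delta$ margin absorbing the constant loss in the balancing between $\Pr(A_{e,f}) = 1/k$ and the product of $(1 - x_F)$ factors (replaced by the independent-set partition function in the Bissacot et al.\ strengthening), and the $o(\Delta)$ slack $s(\Delta)$ handling lower-order terms. The main obstacle is the interplay between the two families of events: each $B_C$-event depends on many other $B_{C'}$-events whose cycles share an edge with $C$, producing a recursive cycle contribution that must itself be bounded uniformly at every $B$-inequality. Choosing the weights $\alpha,\beta$ carefully and organizing the dependency sums so that the geometric decay provided by the girth is preserved throughout this recursion is the technically most delicate step; once this bookkeeping is completed, the required bound $a'(G) \leq (2+\varepsilon)\Delta + o(\Delta)$ follows by a standard LLL verification analogous to the computations in \cite{Esperet} and \cite{Ndreca}.
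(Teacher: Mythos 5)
Your proposal takes a genuinely different route from the paper --- a single-phase application of the cluster-expansion LLL to a uniformly random coloring --- but it has a fatal gap at the step you dismiss as routine: the claim that ``the adjacency part of each inequality demands $k \gtrsim 2\Delta$.'' This is false for any variable-based form of the Lov\'{a}sz Local Lemma applied to a uniform random coloring. Each event $A_{e,f}$ has probability $1/k$ and is dependent on roughly $4\Delta$ other adjacency events; the symmetric LLL therefore already requires $k \geq 4e\Delta \approx 10.87\Delta$ just to produce a \emph{proper} coloring. The Bissacot et al.\ strengthening helps because the neighborhood of $A_{e,f}$ among adjacency events is covered by two cliques of size about $2\Delta$ (the pairs through $e$ and the pairs through $f$), but optimizing $\mu/(1+2\Delta\mu)^2$ still forces $k \geq 8\Delta + O(1)$. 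This is exactly why the LLL-based bounds for this problem bottom out near $9.62\Delta$ (Ndreca et al.) while the entropy-compression bound is $4\Delta$: the threshold $2\Delta$ is the \emph{greedy} threshold for proper edge coloring, which entropy compression and the Local Cut Lemma recover but the LLL does not. No choice of weights $\alpha,\beta$ or amount of girth can rescue the adjacency inequality, since girth only suppresses the cycle events. A secondary issue: even in a framework that does achieve $2\Delta$ for the adjacency part, the per-cycle weights must be inflated by a factor exponential in the cycle length to absorb the product over the $\Theta(m\Delta)$ adjacent adjacency events, and with only $(2+\varepsilon)\Delta$ colors the resulting effective ratio exceeds $1$, so the sum over \emph{all} cycle lengths diverges; your geometric series with ratio $\Delta/k$ ignores these correction factors.

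The paper's actual proof is structured around both of these obstacles. It uses the Local Cut Lemma (not the LLL) so that the proper-coloring part costs only $2\Delta$ colors, and it splits the cycle-breaking into two phases: first, Lemma~\ref{lemma:short} uses the girth hypothesis --- via Lemma~\ref{lemma:cycles1}, which reduces the \emph{count} of length-$k$ cycles through an edge from $\Delta^{k-2}$ to $\Delta^{k-r-1}$, a gain of $\Delta^{-(r-1)}$ rather than merely a later start of the series --- to produce a proper $(2+\varepsilon/2)\Delta$-coloring with no bichromatic cycle of length up to $\Theta(r\log\Delta)$; second, Lemma~\ref{lemma:long} recolors each edge independently with probability $p$ from a fresh palette of $\varepsilon\Delta/2$ colors to destroy every bichromatic cycle of length at least $\Theta(\log\Delta)$. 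Choosing $r$ large enough makes the two thresholds overlap. If you want to salvage your write-up, you would need to replace the LLL by the LCL (or entropy compression) and add the second, long-cycle phase; as written, the argument cannot get below roughly $8\Delta$ colors.
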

		\begin{remk}
			The bound of the last theorem was recently improved further to $a'(G) \leq (1+\varepsilon)\Delta + o(\Delta)$ by Cai, Perarnau, Reed, and Watts \cite{Cai} using a different (and much more sophisticated) argument.
		\end{remk}
		
		To prove Theorems \ref{theo:no4} and \ref{theo:bigg}, we use a generalization of the Lov\'{a}sz Local Lemma that we call the \emph{Local Cut Lemma} (the LCL for short). The LCL is inspired by recent combinatorial applications of the entropy compression method, although its proof is probabilistic and does not use entropy compression. Several examples of applying the LCL and its proof can be found in \cite{Bernshteyn}. We provide all the required definitions and the statement of the LCL in Section \ref{sec:LCL}. In Section \ref{sec:no4} we prove Theorem \ref{theo:no4}, and in Section \ref{sec:bigg} we prove Theorem \ref{theo:bigg}.
		
		\section{The Local Cut Lemma}\label{sec:LCL}
		
		Roughly speaking, the Local Cut Lemma asserts that if a random set of vertices in a directed graph is cut out by a set of edges which is ``locally small'', then this set of vertices has to be ``large'' with positive probability. To state it rigorously, we will need some definitions.
		
		By a \emph{digraph} we mean a finite directed multigraph. Suppose that $D$ is a digraph with vertex set $V$ and edge set $E$. For $x$, $y \in V$, let $E(x,y) \subseteq E$ denote the set of all edges with tail $x$ and head $y$.
		
		A digraph $D$ is \emph{simple} if for all $x$, $y \in V$, $|E(x,y)| \leq 1$. If $D$ is simple and $|E(x,y)| = 1$, then the unique edge with tail $x$ and head $y$ is denoted by $xy$ (or sometimes $(x,y)$). For an arbitrary digraph $D$, let $D^s$ denote its \emph{underlying simple digraph}, i.e., the simple digraph with vertex set $V$ in which $xy$ is an edge if and only if $E(x,y) \neq \emptyset$. Denote the edge set of $D^s$ by $E^s$. For a set $F \subseteq E$, let $F^s \subseteq E^s$ be the set of all edges $xy \in E^s$ such that $F \cap E(x,y) \neq \emptyset$.
		
		A set $A \subseteq V$ is \emph{out-closed} (resp. \emph{in-closed}) if for all $xy \in E^s$, $x \in A$ implies $y \in A$ (resp. $y \in A$ implies $x \in A$).
		
		\begin{defn}
			Let $D$ be a digraph with vertex set $V$ and edge set $E$ and let $A \subseteq V$ be an out-closed set of vertices. A set $F \subseteq E$ of edges is an \emph{$A$-cut} if $A$ is in-closed in $D^s - F^s$.
		\end{defn}
		
		In other words, an $A$-cut $F$ has to contain at least one edge $e \in E(x,y)$ for each pair $x$, $y$ such that $x \in V \setminus A$, $y \in A$, and $xy \in E^s$.
		
		To understand the motivation behind the LCL, suppose that we are given a random out-closed set of vertices $A \subseteq V$ and a random $A$-cut $F \subseteq E$ in a digraph $D$. Since $A$ is out-closed, for every $xy \in E^s$,
		$$
			\Pr(x \in A) \leq \Pr(y \in A).
		$$
		We would like to establish a similar inequality in the other direction. More precisely, we want to find a function $\omega : E^s \to \mathbb{R}_{\geq 1}$ such that for all $xy \in E^s$,
		\begin{equation}\label{eq:upperbound}
			\Pr(y\in A) \leq \Pr(x \in A) \cdot \omega(xy).
		\end{equation}
		Note that if we can prove (\ref{eq:upperbound}) for some function $\omega$, then for all $xy \in E^s$,
		$$
			\Pr(x \in A) \geq \frac{\Pr(y \in A)}{\omega(xy)},
		$$
		so $\Pr(y \in A) > 0$ implies $\Pr(x \in A) > 0$. More generally, we say that a vertex $z \in V$ is \emph{reachable} from $x \in V$ if $D$ (or, equivalently, $D^s$) contains a directed $xz$-path. We can give the following definition.
		
		\begin{defn}
			Let $D$ be a digraph with vertex set $V$ and edge set $E$. Suppose that $\omega : E^s \to \mathbb{R}_{\geq 1}$ is an assignment of nonnegative real numbers to the edges of $D^s$. For $x$, $z \in V$ such that $z$ is reachable from $x$, define
			$$
				\underline{\omega}(x,z) \coloneqq \min \left\{\prod_{i=1}^k \omega(z_{i-1}z_i) \,:\, \text{$x = z_0\longrightarrow z_1\longrightarrow$\ldots$\longrightarrow z_k = z$ is a directed $xz$-path in $D^s$}\right\}.
			$$
		\end{defn}
		
		Then (\ref{eq:upperbound}) yields that for all $x$, $z \in V$ such that $z$ is reachable from $x$,
		$$
			\Pr(x \in A) \geq \frac{\Pr(z \in A)}{\underline{\omega}(x,z)}.
		$$
		In particular, $\Pr(z \in A) > 0$ implies $\Pr(x \in A) > 0$.
		
		How can we show that (\ref{eq:upperbound}) holds for some function $\omega$? Since $F$ is an $A$-cut, it would be enough to prove that for each $e \in E(x,y)$, the following probability is small:
		\begin{equation}\label{eq:prob}
			\Pr(e \in F \vert y \in A).
		\end{equation}
		In fact, it suffices to have
		$$
			\sum_{e \in E(x,y)} \Pr(e \in F \vert y \in A) < 1.
		$$
		Unfortunately, probability (\ref{eq:prob}) is usually hard to estimate directly. However, it might turn out that for some other vertex $z$ reachable from $y$, we can give a good upper bound on the following similar probability:
		\begin{equation}\label{eq:prob1}
			\Pr(e \in F \vert z \in A).
		\end{equation}
		Of course, the farther $z$ is from $y$ in the digraph, the less useful an upper bound on (\ref{eq:prob1}) would be. The following definition captures this trade-off.
		
		\begin{defn}\label{defn:risk}
			Let $D$ be a digraph with vertex set $V$ and edge set $E$. Suppose that $A \subseteq V$ is a random out-closed set of vertices and let $F \subseteq E$ be a random $A$-cut. Fix a function $\omega : E^s \to \mathbb{R}_{\geq 1}$. For $xy \in E^s$, $e \in E(x,y)$, and a vertex $z \in V$ reachable from $y$, let
			\begin{equation}\label{eq:risk}
				\rho_\omega(e, z) \coloneqq \Pr(e \in F \vert z \in A) \cdot \underline{\omega}(x, z).
			\end{equation}
			For $e \in E(x,y)$, define the \emph{risk} to $e$ as
			$$
				\rho_\omega(e) \coloneqq \min_z \rho_\omega(e, z),
			$$
			where $z$ ranges over all vertices reachable from $y$.
		\end{defn}	
		
		\begin{remk}
			For random events $B$, $B'$, the conditional probability $\Pr(B'\vert B)$ is only defined if $\Pr(B) > 0$. For convenience, we adopt the following notational convention in Definition~\ref{defn:risk}: If $B$ is a random event and $\Pr(B) = 0$, then $\Pr(B'\vert B) = 0$ for all events $B'$. Note that this way the crucial equation $\Pr(B' \vert B) \cdot \Pr(B) = \Pr(B' \cap B)$ is satisfied even when $\Pr(B) = 0$, and this is the only property of conditional probability we will use.
		\end{remk}
		
		We are now ready to state the LCL.
		
		\begin{lemma}[Local Cut Lemma \cite{Bernshteyn}]\label{lemma:LCL}
			Let $D$ be a digraph with vertex set $V$ and edge set $E$. Suppose that $A \subseteq V$ is a random out-closed set of vertices and let $F \subseteq E$ be a random $A$-cut. If a function $\omega : E^s \to \mathbb{R}_{\geq 1}$ satisfies the following inequality for all $xy \in E^s$:
			\begin{equation}\label{eq:LCL}
				\omega(xy) \geq 1 + \sum_{e \in E(x,y)} \rho_\omega(e),
			\end{equation}
			then for all $xy \in E^s$,
			$$
				\Pr(y \in A) \leq \Pr(x \in A) \cdot\omega(xy).
			$$
		\end{lemma}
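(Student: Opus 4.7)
The plan is to prove a strengthened iterated version of the conclusion---for every $x \in V$ and every $z$ reachable from $x$, $\Pr(z \in A) \leq \Pr(x \in A) \cdot \underline{\omega}(x, z)$---which immediately implies the edge inequality by specializing to the length-one path $xy$. I would prove this by an extremal maximum-ratio argument rather than by induction, since the digraph may contain directed cycles and so there is no obvious well-founded order to induct on.

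The starting ingredient is a direct consequence of the definitions: for any edge $uv \in E^s$, out-closedness of $A$ gives $\Pr(v \in A) = \Pr(u \in A) + \Pr(v \in A \wedge u \notin A)$, and the $A$-cut property forces some $e \in E(u,v)$ into $F$ on the event $\{u \notin A, v \in A\}$, so
$$ \Pr(v \in A) \leq \Pr(u \in A) + \sum_{e \in E(u, v)} \Pr(e \in F \wedge v \in A). $$
For each $e$ I would choose a vertex $w_e$ reachable from $v$ that realizes the minimum in the definition of $\rho_\omega(e)$, use out-closedness once more to pass from $v \in A$ to $w_e \in A$, and rewrite the joint probability as $\Pr(e \in F \mid w_e \in A)\,\Pr(w_e \in A)$.

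Now fix $x$ and write $\phi(z) := \underline{\omega}(x, z)$ for $z$ reachable from $x$ (with $\phi(x) = 1$). Set $R := \max_{z} \Pr(z \in A)/\phi(z)$, which is attained since the reachable set is finite and $\phi \geq 1$. Assume toward contradiction that $R > \Pr(x \in A)$; then any maximizer $z^*$ is distinct from $x$. Pick a shortest (in $\underline{\omega}$) directed $x$-to-$z^*$ path, and let $x'$ be the predecessor of $z^*$ on it, so $\phi(z^*) = \phi(x')\omega(x'z^*)$. Applying the displayed bound at the edge $x'z^*$, bounding each $\Pr(w_e \in A) \leq R\,\phi(w_e) \leq R\,\phi(x')\,\underline{\omega}(x', w_e)$ by extremality and multiplicativity of $\underline{\omega}$, and invoking the LCL hypothesis $\sum_e \rho_\omega(e) \leq \omega(x'z^*) - 1$, a short rearrangement yields $R \leq \Pr(x' \in A)/\phi(x')$, i.e.\ $x'$ is also a maximizer. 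Iterating the same argument backwards along the chosen path eventually reaches $x$ itself, forcing $R = \Pr(x \in A)/\phi(x) = \Pr(x \in A)$, which contradicts $R > \Pr(x \in A)$.

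The main obstacle is precisely the circularity that surfaces when one unrolls the cut bound: it expresses $\Pr(v \in A)$ in terms of probabilities at vertices $w_e$ that may be further \emph{along} the digraph, so neither induction on path length nor induction on $\Pr(\cdot \in A)$ closes cleanly in the presence of directed cycles. The extremal choice of $z^*$ combined with the shortest-$\underline{\omega}$-path choice of $x'$ is what breaks this circularity, by promoting the bound into a statement that propagates the maximal ratio one edge backward at a time until it collides with the trivial ratio $1$ at $x$. The degenerate case $\Pr(x \in A) = 0$ is absorbed into the same argument---the maximizer still exists because $\phi \geq 1$, and the terminal equality at $x$ then forces $R = 0$, which contradicts $R > 0$.
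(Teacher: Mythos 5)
Your argument is correct, but be aware that the paper itself contains no proof of Lemma~\ref{lemma:LCL}: the lemma is imported from \cite{Bernshteyn}, so there is no in-paper proof to compare against, and your write-up has to stand on its own. It does. The one-edge inequality $\Pr(v\in A)\leq \Pr(u\in A)+\sum_{e\in E(u,v)}\Pr(e\in F\wedge v\in A)$ follows exactly as you say from out-closedness plus the cut property; passing from $\{v\in A\}$ to $\{w_e\in A\}$ uses out-closedness along a directed path; and the factorization $\Pr(e\in F\wedge w_e\in A)=\Pr(e\in F\mid w_e\in A)\Pr(w_e\in A)$ is precisely the identity the paper's null-event convention is designed to preserve, so the degenerate case is genuinely absorbed. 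The extremal step also closes: from $\phi(z^*)\geq\phi(x')\,\omega(x'z^*)$ and the hypothesis $\sum_e\rho_\omega(e)\leq\omega(x'z^*)-1$ you do get $\Pr(x'\in A)\geq R\,\phi(x')$, and the strengthened conclusion $\Pr(z\in A)\leq\Pr(x\in A)\,\underline{\omega}(x,z)$ that you actually prove is exactly Corollary~\ref{corl:positive}. Two points should be made explicit in a final version, though neither is a real gap. First, the ``multiplicativity'' $\underline{\omega}(x,w_e)\leq\underline{\omega}(x,x')\,\underline{\omega}(x',w_e)$, and likewise the reverse inequality $\phi(z^*)\leq\phi(x')\,\omega(x'z^*)$ needed for prefix optimality, are not automatic: concatenating two paths yields only a walk, and you must observe that since $\omega\geq 1$ one can delete cycles from a walk without increasing the product, so the minimum over walks equals the minimum over paths. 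Second, for termination of the backward iteration you should say explicitly that you stay on the originally chosen optimal path, whose prefixes you have just shown to be optimal (equivalently, pick among all maximizers one whose optimal path from $x$ has the fewest edges); re-selecting an arbitrary optimal path at each step would not obviously terminate. With those clarifications your proof is complete.
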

		
		\begin{corl}\label{corl:positive}
			Let $D$, $A$, $F$, $\omega$ be as in Lemma~\ref{lemma:LCL}. Let $x$, $z \in V$ be such that $z$ is reachable from $x$ and suppose that $\Pr(z \in A) > 0$. Then
			$$
			\Pr(x \in A) \geq \frac{\Pr(z \in A)}{\underline{\omega}(x,z)} > 0.
			$$
		\end{corl}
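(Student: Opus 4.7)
The plan is to deduce the corollary directly from Lemma~\ref{lemma:LCL} by iterating its conclusion along a well-chosen directed path from $x$ to $z$ in $D^s$.

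First, since $z$ is reachable from $x$, by the definition of $\underline{\omega}(x,z)$ I can fix a directed $xz$-path $x = z_0 \longrightarrow z_1 \longrightarrow \cdots \longrightarrow z_k = z$ in $D^s$ that attains the minimum, so that
$$
\underline{\omega}(x,z) \;=\; \prod_{i=1}^{k} \omega(z_{i-1}z_i).
$$
Here I use the assumption $\omega : E^s \to \mathbb{R}_{\geq 1}$, which in particular guarantees each factor is at least $1$, so the product is well-defined and positive.

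Next, I apply the conclusion of Lemma~\ref{lemma:LCL} to each edge $z_{i-1}z_i$ of this path, obtaining
$$
\Pr(z_i \in A) \;\leq\; \Pr(z_{i-1} \in A) \cdot \omega(z_{i-1}z_i) \qquad \text{for } i = 1,\ldots,k.
$$
Multiplying these inequalities together (or, equivalently, a trivial induction on $k$) telescopes to
$$
\Pr(z \in A) \;\leq\; \Pr(x \in A) \cdot \prod_{i=1}^{k} \omega(z_{i-1}z_i) \;=\; \Pr(x \in A) \cdot \underline{\omega}(x,z).
$$

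Finally, I rearrange to get $\Pr(x \in A) \geq \Pr(z \in A)/\underline{\omega}(x,z)$. The hypothesis $\Pr(z \in A) > 0$ together with the finiteness and positivity of $\underline{\omega}(x,z)$ then gives the strict inequality $\Pr(x \in A) > 0$, completing the proof. There is no real obstacle here: the entire content is a telescoping multiplication along the optimal path, and the role of Lemma~\ref{lemma:LCL} is precisely to supply the single-edge inequality that makes this telescoping work.
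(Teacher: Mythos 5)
Your proof is correct and matches the paper's own (implicit) argument: the paper derives the corollary exactly by telescoping the single-edge inequality $\Pr(y\in A)\leq \Pr(x\in A)\cdot\omega(xy)$ along a minimizing directed $xz$-path, as sketched in the discussion preceding Lemma~\ref{lemma:LCL}. Nothing further is needed.
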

		
		One particular class of digraphs often appearing in applications of the LCL is the class of the \emph{hypercube digraphs}. For a finite set $I$, the vertices of the hypercube digraph $Q(I)$ are the subsets of $I$ and its edges are of the form $(S \cup \{i\}, S)$ for each $S \subset I$ and $i \in I \setminus S$ (in particular, $Q(I)$ is simple). Note that if $S$, $T \subseteq I$, then $T$ is reachable from $S$ in $Q(I)$ if and only if $T \subseteq S$. Moreover, if $T \subseteq S$, then any directed $(S,T)$-path has length exactly $|S \setminus T|$. Therefore, if we assume that $\omega(S \cup \{i\}, S) = \omega \in \mathbb{R}_{\geq 1}$ is a fixed constant, then
		$$
		\underline{\omega}(S,T) = \omega^{|S \setminus T|}
		$$		
		for all $T \subseteq S \subseteq I$.
		
		\section{Graphs with a forbidden bipartite subgraph}\label{sec:no4}
		
		\subsection{Combinatorial lemmata}
		
		For this section we assume that a bipartite graph $H$ is fixed. In particular, all constants that we mention depend on $H$. We will use the following version of the K\H{o}vari--S\'{o}s--Tur\'{a}n theorem.
		
		\begin{theo}[K\H{o}vari, S\'{o}s, Tur\'{a}n \cite{Kovari}]\label{theo:Kovari}
			Let $G$ be a graph with $n$ vertices and $m$ edges that does not contain the complete bipartite graph $K_{k, k}$ as a subgraph. Then $m \leq O(n^{2-1/k})$ (assuming that $n \to \infty$).
		\end{theo}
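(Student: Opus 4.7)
The plan is to prove this by the classical double-counting argument of K\H{o}vari, S\'{o}s, and Tur\'{a}n. The idea is to count pairs $(v, S)$, where $v \in V(G)$ and $S$ is a $k$-element subset of the neighborhood $N(v)$ of $v$, in two different ways.

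Grouping by $v$, the total number of such pairs equals $\sum_{v \in V(G)} \binom{d(v)}{k}$, which by convexity of $x \mapsto \binom{x}{k}$ together with Jensen's inequality is at least $n \binom{\bar{d}}{k}$, where $\bar{d} = 2m/n$ is the average degree of $G$. Grouping instead by $S$, each $k$-subset $S \subseteq V(G)$ is counted once for every $v$ with $S \subseteq N(v)$; since $G$ contains no $K_{k,k}$, no set of $k$ vertices can have $k$ or more common neighbors, so each $S$ contributes at most $k-1$ to the count. This yields the upper bound $(k-1)\binom{n}{k}$.

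Combining the two estimates, $n \binom{\bar{d}}{k} \leq (k-1)\binom{n}{k}$, and expanding both binomials and solving for $\bar{d}$ leads to $\bar{d} \leq C_k \, n^{1-1/k}$ for some constant $C_k$ depending only on $k$. Consequently, $m = n\bar{d}/2 = O(n^{2-1/k})$, which is the required bound.

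The only mild subtlety I anticipate is that Jensen's inequality for $\binom{x}{k}$ is cleanest when applied to values of $x$ that are at least $k-1$, since $\binom{d(v)}{k} = 0$ for $d(v) < k$. This can be handled in one of two standard ways: either by using the convex extension of $\binom{x}{k}$ (which vanishes on $[0, k-1]$) and applying Jensen directly, or by discarding vertices of degree less than $k$ and observing that their total edge contribution is at most $kn/2$, which is absorbed into the $O(\cdot)$ term. Either way, no new ideas beyond the classical argument are needed.
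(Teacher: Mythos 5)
Your argument is the standard K\H{o}vari--S\'{o}s--Tur\'{a}n double count, and it is correct; the paper itself offers no proof of this statement (it is quoted as a classical result with a citation), so there is nothing to diverge from. Both ways you propose to handle the Jensen step work, and the one other point worth a sentence in a written-up version is that a $k$-set $S$ with $k$ common neighbors really does yield a $K_{k,k}$ \emph{subgraph}: the set of common neighbors is automatically disjoint from $S$ because no vertex of a simple graph is adjacent to itself.
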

		\begin{corl}\label{corl:noH}
			There exist positive constants $\alpha$ and $\delta$ such that if a graph $G$ with $n$ vertices and $m$ edges does not contain $H$ as a subgraph, then $m \leq \alpha n^{2- \delta}$.
		\end{corl}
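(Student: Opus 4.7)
The plan is to reduce to the case of a complete bipartite forbidden subgraph and then invoke Theorem~\ref{theo:Kovari} directly. The key observation is that every bipartite graph embeds into a balanced complete bipartite graph: since $H$ is bipartite, its vertex set partitions into two independent sets of sizes $a$ and $b$, so setting $k := \max\{a,b\}$ (or more crudely $k := |V(H)|$) yields $H \subseteq K_{k,k}$. By monotonicity of the subgraph relation, any graph containing $K_{k,k}$ also contains $H$, and the contrapositive tells us that the $H$-free graph $G$ is also $K_{k,k}$-free.

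Next I would apply Theorem~\ref{theo:Kovari} with this value of $k$. It yields constants $C = C(H)$ and $n_0 = n_0(H)$ such that $m \leq C\, n^{2 - 1/k}$ whenever $n \geq n_0$. For the finitely many remaining values $n < n_0$, the trivial bound $m \leq \binom{n}{2} \leq n^{2}$ can be rewritten as $m \leq n_0^{1/k} \cdot n^{2 - 1/k}$, which handles the regime that the asymptotic statement misses. Setting $\delta := 1/k$ and $\alpha := \max\{C,\, n_0^{1/k}\}$ then gives $m \leq \alpha\, n^{2 - \delta}$ uniformly in $n$, which is exactly the claim.

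Since the argument is essentially bookkeeping, I do not anticipate any substantive obstacle; the only minor subtlety is folding the small-$n$ regime into the constant $\alpha$ so that the inequality genuinely holds for every $n$ rather than only asymptotically. One could alternatively prove the corollary by a direct double-counting argument that mimics the proof of Theorem~\ref{theo:Kovari} (counting copies of the star $K_{1,k}$ in $G$ and using convexity), but routing through $K_{k,k}$ is the shortest path to the stated bound.
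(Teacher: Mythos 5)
Your argument is correct and is exactly the intended derivation: the paper states the corollary without proof as an immediate consequence of Theorem~\ref{theo:Kovari}, via the observation that a bipartite $H$ embeds in some $K_{k,k}$, so $H$-free implies $K_{k,k}$-free, with the implicit constant and the finitely many small values of $n$ absorbed into $\alpha$. Your handling of the small-$n$ regime is the right bit of bookkeeping and the proposal is complete.
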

		
		In what follows we fix the constants $\alpha$ and $\delta$ from the statement of Corollary \ref{corl:noH}. We say that the \emph{length} of a path $P$ is the number of edges in it. Using Corollary \ref{corl:noH}, we obtain the following.
		
		\begin{lemma}\label{lemma:3path}
			There is a positive constant $\beta$ such that the following holds. Let $G$ be a graph with maximum degree $\Delta$ that does not contain $H$ as a subgraph. Then for any two vertices $u$, $v \in V(G)$, the number of $uv$-paths of length $3$ in $G$ is at most $\beta\Delta^{2-\delta}$.
		\end{lemma}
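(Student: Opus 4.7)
The plan is to count length-$3$ paths from $u$ to $v$ by reducing the count to the number of edges in a small auxiliary subgraph of $G$, and then to apply Corollary~\ref{corl:noH} to that auxiliary graph. A $uv$-path of length~$3$ has the form $u$--$a$--$b$--$v$ with $a\in N(u)\setminus\{v\}$, $b\in N(v)\setminus\{u\}$, $a\ne b$, and $ab\in E(G)$. In particular, every such path is determined by choosing an edge $ab$ of $G$ with both endpoints in the set $W \coloneqq (N(u)\cup N(v))\setminus\{u,v\}$.

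First I would introduce $G' \coloneqq G[W]$, the subgraph of $G$ induced on $W$. Since $|N(u)|,|N(v)| \leq \Delta$, we have $|W| \leq 2\Delta$. Because $G$ contains no copy of $H$, neither does its subgraph $G'$, so Corollary~\ref{corl:noH} yields
\[
|E(G')| \;\leq\; \alpha |W|^{2-\delta} \;\leq\; \alpha (2\Delta)^{2-\delta} \;=\; 2^{2-\delta}\alpha \,\Delta^{2-\delta}.
\]

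Next I would bound the multiplicity of the correspondence between length-$3$ $uv$-paths and edges of $G'$. Given any edge $xy\in E(G')$, the only length-$3$ $uv$-paths whose middle edge is $xy$ are $u$--$x$--$y$--$v$ (which requires $x\in N(u)$ and $y\in N(v)$) and $u$--$y$--$x$--$v$ (which requires $y\in N(u)$ and $x\in N(v)$). Hence each edge of $G'$ accounts for at most $2$ such paths, so the total number of length-$3$ $uv$-paths is at most $2\cdot 2^{2-\delta}\alpha\,\Delta^{2-\delta} = \beta\Delta^{2-\delta}$ for $\beta\coloneqq 2^{3-\delta}\alpha$.

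There isn't really a substantial obstacle here: the entire argument is a direct application of the Kővári--Sós--Turán-type bound to the ``link'' graph of the pair $\{u,v\}$. The only points that require a moment of care are (i) noting that a subgraph of an $H$-free graph is $H$-free, so Corollary~\ref{corl:noH} applies to $G'$, and (ii) verifying that the middle-edge map is at most $2$-to-$1$, which handles the case when a vertex lies in $N(u)\cap N(v)$ without doubling the bound in a harmful way.
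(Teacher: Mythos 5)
Your proof is correct and follows essentially the same route as the paper's: both reduce the count to (twice) the number of edges of the induced subgraph on $N_G(u)\cup N_G(v)$, which has at most $2\Delta$ vertices, and then apply Corollary~\ref{corl:noH}, arriving at the same constant $\beta = 2^{3-\delta}\alpha$. The only (immaterial) difference is that you excise $u$ and $v$ from the vertex set of the auxiliary graph.
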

		\begin{proof}
			Suppose that $u$---$x$---$y$---$v$ is a $uv$-path of length $3$ in $G$. Then $x \in N_G(u)$, $y \in N_G(v)$, and hence $xy \in E(G[N_G(u)\cup N_G(v)])$. Note that any edge $xy \in E(G[N_G(u)\cup N_G(v)])$ can possibly give rise to at most two different $uv$-paths of length $3$ (namely $u$---$x$---$y$---$v$ and $u$---$y$---$x$---$v$). Therefore, the number of $uv$-paths of length $3$ in $G$ is not greater than $2|E(G[N_G(u)\cup N_G(v)])|$. Since $|V(G[N_G(u)\cup N_G(v)])| \leq 2\Delta$, by Corollary \ref{corl:noH} we have that $|E(G[N_G(u)\cup N_G(v)])| \leq \alpha (2\Delta)^{2-\delta}$, so the number of $uv$-paths of length $3$ in $G$ is at most $\left(2^{3-\delta}\alpha\right)\Delta^{2-\delta}$.
		\end{proof}
		
		In what follows we fix the constant $\beta$ from the statement of Lemma \ref{lemma:3path}. The following fact is crucial for our proof.
		
		\begin{lemma}\label{lemma:cycles}
			Let $G$ be a graph with maximum degree $\Delta$ that does not contain $H$ as a subgraph. Then for any edge $e \in E(G)$ and for any integer $k \geq 4$, the number of cycles of length $k$ in $G$ that contain $e$ is at most $\beta\Delta^{k-2-\delta}$.
		\end{lemma}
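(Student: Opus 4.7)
The plan is to reduce counting cycles of length $k$ through $e$ to counting $uv$-paths of length $k-1$ in $G$, and then apply Lemma~\ref{lemma:3path} to an appropriately chosen sub-path of length $3$.

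Write $e = uv$. Every cycle of length $k$ containing $e$ is the union of $e$ and a unique $uv$-path $P$ of length $k-1$ in $G$, and conversely: if $P = (u = w_0, w_1, \ldots, w_{k-1} = v)$ is a $uv$-path of length $k-1 \geq 3$, then $P$ cannot use the edge $e$ (since $u$ appears only as $w_0$ and $v$ only as $w_{k-1}$, while $k-1 \geq 3$), so $P \cup \{e\}$ is a $k$-cycle through $e$. Thus it suffices to bound the number of $uv$-paths of length $k-1$ in $G$ by $\beta\Delta^{k-2-\delta}$.

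Given such a path $(u = w_0, w_1, \ldots, w_{k-1} = v)$, I would split it at vertex $w_{k-4}$ into a prefix $(w_0, \ldots, w_{k-4})$ of length $k-4$ and a suffix $(w_{k-4}, w_{k-3}, w_{k-2}, v)$ of length $3$. This decomposition is unique, so the count is at most the number of walks of length $k-4$ starting at $u$ (which is at most $\Delta^{k-4}$ by the trivial greedy bound, since each step offers at most $\Delta$ choices) times the maximum, over possible endpoints $w$ of such a walk, of the number of $wv$-paths of length $3$ (which is at most $\beta\Delta^{2-\delta}$ by Lemma~\ref{lemma:3path}). Multiplying the two gives the desired bound $\beta\Delta^{k-2-\delta}$.

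The base case $k = 4$ corresponds to a trivial prefix (the walk of length $0$ at $u$) and is exactly the statement of Lemma~\ref{lemma:3path}. There is no genuine obstacle: Lemma~\ref{lemma:3path} already encapsulates all the sparsity coming from the absence of $H$, and the decomposition absorbs the remaining ``linear in $k$'' part of the path using only a greedy argument. The one subtlety worth noting is that the prefix is allowed to be an arbitrary walk rather than a path, which is harmless for an upper bound and guarantees that each genuine $uv$-path is counted at most once.
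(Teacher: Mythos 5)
Your proof is correct and follows essentially the same route as the paper: reduce to counting $uv$-paths of length $k-1$, split each at $w_{k-4}$ into a prefix of length $k-4$ (bounded greedily by $\Delta^{k-4}$) and a length-$3$ suffix handled by Lemma~\ref{lemma:3path}. Your extra remarks about the cycle-to-path correspondence and the walk-versus-path point are fine but not needed beyond what the paper already does.
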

		\begin{proof}
			Suppose that $e = uv \in E(G)$. Note that the number of cycles of length $k$ that contain $e$ is not greater than the number of $uv$-paths of length $k-1$. Consider any $uv$-path $u$---$x_1$---\ldots---$x_{k-2}$---$v$ of length $k-1$. Then $u$---$x_1$---\ldots---$x_{k-4}$ is a path of length $k-4$, and $x_{k-4}$---$x_{k-3}$---$x_{k-2}$---$v$ is a path of length $3$. There are at most $\Delta^{k-4}$ paths of length $k-4$ starting at $u$, and, given a path $u$---$x_1$---\ldots---$x_{k-4}$, the number of $x_{k-4}v$-paths of length $3$ is at most $\beta \Delta^{2-\delta}$. Hence the number of $uv$-paths of length $k-1$ is at most $\Delta^{k-4} \cdot \beta \Delta^{2-\delta} = \beta \Delta^{k-2-\delta}$.
		\end{proof}
		
		\subsection{Probabilistic set-up}\label{subsec:prob}
		
		Let $G$ be a graph with maximum degree $\Delta$ that does not contain $H$ as a subgraph. Fix some constant $c$ and let $\varphi$ be a $(2+c)\Delta$-edge coloring of $G$ chosen uniformly at random. Consider the hypercube digraph $Q \coloneqq Q(E(G))$. Define a set $A \subseteq V(Q)$ as follows:
		$$
			S \in A \,\vcentcolon\Longleftrightarrow\, \text{$\varphi$ is an acyclic edge coloring of $G[S]$},
		$$ 
		where $G[S]$ denotes the subgraph of $G$ obtained by deleting all the edges outside $S$. Note that $A$ is out-closed in $Q$. Moreover, $\emptyset \in A$ with probability $1$. On the other hand, $E(G) \in A$ if and only if $\varphi$ is an acyclic edge coloring of $G$. Therefore, if we can apply the LCL here, then Corollary~\ref{corl:positive} will imply that $\varphi$ is an acyclic edge coloring of $G$ with positive probability, and hence $a'(G) \leq (2 + c)\Delta$.
		
		Call a cycle $C$ of length $2t$ \emph{$\varphi$-bichromatic} if $C= e_1$, $e_2$, \ldots, $e_{2t}$ and $\varphi(e_{2i-1}) = \varphi(e_{2t-1})$, $\varphi(e_{2i}) = \varphi(e_{2t})$ for all $1 \leq i \leq t-1$. Consider an arbitrary edge $(S \cup \{e\}, S)$ of $Q$. If $S \in A$, but $S \cup \{e\} \not \in A$, then either there exists an edge $e' \in S$ adjacent to $e$ such that $\varphi(e) = \varphi(e')$, or there exists a $\varphi$-bichromatic cycle $C \subseteq S \cup \{e\}$ such that $e \in C$. This observation motivates the following construction. Let $\widetilde{Q}$ be the digraph with $\widetilde{Q}^s = Q$ such that for each $(S \cup \{e\}, S) \in E(Q)$, the edges of $\widetilde{Q}$ between $S \cup \{e\}$ and $S$ are of the following two kinds:
		\begin{enumerate}
			\item $f_C$ for each cycle $C$ of even length passing through $e$;
			\item one additional edge $f$.
		\end{enumerate}
		Let
		$$
			f \in F\,\vcentcolon\Longleftrightarrow\, \text{there exists an edge $e'$ adjacent to $e$ such that $\varphi(e) = \varphi(e')$},
		$$
		and
		$$
			f_C \in F \,\vcentcolon\Longleftrightarrow\, \text{$C$ is $\varphi$-bichromatic}.
		$$
		The above observation implies that $F$ is an $A$-cut in $\widetilde{Q}$.
		
		To apply the LCL, it remains to estimate the risk to each edge of $\widetilde{Q}$. Denote the edge set of $\widetilde{Q}$ by $E$ and consider any $(S \cup \{e\}, S) \in E^s$. Note that a vertex of $\widetilde{Q}$ reachable from $S$ is just a subset of $S$. First consider the edge $f\in (S \cup \{e\}, S)$ of the second kind. For a given edge $e'$ adjacent to $e$, we have
		$$
			\Pr(\varphi(e) = \varphi(e') \vert S \in A) \leq \frac{1}{(2+c)\Delta},
		$$
		since $e \not \in S$ and the colors of different edges are independent. (Note that we might have a strict inequality if $\Pr(S \in A) = 0$, since then, according to our convention, $\Pr(\varphi(e) = \varphi(e') \vert S \in A) = 0$ as well.) Thus, we have
		\begin{align*}
			\Pr(f \in F \vert S \in A) &\leq\sum_{e' \in N_G(e)}  \Pr(\varphi(e) = \varphi(e') \vert S \in A) \\
			&\leq 2\Delta \cdot \frac{1}{(2+c)\Delta} = \frac{2}{2+c},
		\end{align*}
		where $N_G(e)$ denotes the set of edges in $G$ adjacent to $e$. Note that $|(S \cup \{e\}) \setminus S| = 1$, so, if we assume that $\omega \in \mathbb{R}_{\geq 1}$ is a fixed constant,
		$$
			\underline{\omega}(S \cup \{e\}, S) = \omega.
		$$
		Therefore,
		$$
			\rho_\omega(f) \leq \rho_\omega(f, S) \leq \frac{2\omega}{2+c}.
		$$
		
		Now consider any edge $f_C \in E(S \cup \{e\}, S)$ corresponding to a cycle $C \ni e$ of length $2t$.		Suppose that $C = e_1$, $e_2$, \ldots, $e_{2t}$, where $e_1 = e$. Then $f_C \in F$ if and only if $\varphi(e_{2i-1}) = \varphi(e_{2t-1})$ and $\varphi(e_{2i}) = \varphi(e_{2t})$ for all $1 \leq i \leq t-1$. Even if the colors of $e_{2t-1}$ and $e_{2t}$ are fixed, the probability of this happening is $1/((2+c)\Delta)^{2t-2}$. Keeping this observation in mind, let $C' \coloneqq \{e_1, e_2, \ldots, e_{2t-2}\}$. Then
		$$
			\Pr(f_C \in F \vert S \setminus C' \in A) \leq \frac{1}{((2+c)\Delta)^{2t-2}}.
		$$
		Also, $|(S \cup \{e\})\setminus (S \setminus C')| \leq |C'| = 2t-2$, so if we assume that $\omega \in \mathbb{R}_{\geq 1}$ is a constant, then
		$$
			\underline{\omega}(S \cup \{e\}, S \setminus C') \leq \omega^{2t-2}.
		$$
		Therefore,
		$$
			\rho_\omega(f_C) \leq \rho_\omega(f_C, S \setminus C') \leq \frac{\omega^{2t-2}}{((2+c)\Delta)^{2t-2}}.
		$$
		
		We are ready to apply the LCL. Assuming that $\omega \in \mathbb{R}_{\geq 1}$ is a constant and using Lemma~\ref{lemma:cycles}, it is enough to show
		\begin{equation}\label{eq:acyc}
			\omega \geq 1 + \sum_{t = 2}^\infty \frac{\beta\Delta^{2t-2-\delta} \omega^{2t-2}}{((2+c)\Delta)^{2t-2}} + \frac{2\omega}{2+c} = 1 + \beta\Delta^{-\delta}\frac{\left(\omega/(2+c)\right)^2}{1-\left(\omega/(2+c)\right)^2}+\frac{2\omega}{2+c},
		\end{equation}
		where the last equality holds under the assumption that $\omega/(2+c) < 1$. If we denote $y = \omega/(2+c)$, then (\ref{eq:acyc}) turns into
		\begin{align}\label{eq:acyc1}
			c \geq \frac{1}{y} + \beta \Delta^{-\delta} \frac{y^2}{1-y^2}.
		\end{align}
		Now if $c = 1 + \varepsilon$ for any given $\varepsilon > 0$, then we can take $1/y = 1 + \varepsilon/2$. For this particular value of $y$, we have
		$$
			\beta \Delta^{-\delta} \frac{y^2}{1-y^2} \xrightarrow[\Delta \to \infty]{} 0,
		$$
		so for $\Delta$ large enough, $\beta \Delta^{-\delta} y^2/(1-y^2) \leq \varepsilon/2$, and (\ref{eq:acyc1}) is satisfied. This observation completes the proof of Theorem \ref{theo:no4}. A more precise calculation shows that (\ref{eq:acyc1}) can be satisfied for $c = 1 + \varepsilon$ as long as $\varepsilon \geq \gamma\Delta^{-\delta/2}$ for some absolute constant $\gamma$.
		
		\section{Graphs with large girth}\label{sec:bigg}
		
		\subsection{Breaking short cycles}
		
		The proof of Theorem \ref{theo:bigg} proceeds in two steps. Assuming that the girth of $G$ is large enough, we first show that there is a proper edge coloring of $G$ by $(2+\varepsilon/2)\Delta$ colors with no ``short'' bichromatic cycles (where ``short'' means of length roughly $\log \Delta$). Then we use the remaining $\varepsilon \Delta/2$ colors to break all the ``long'' bichromatic cycles.
		
		We start with the following observations analogous to Lemmata \ref{lemma:3path} and \ref{lemma:cycles}.
		
		\begin{lemma}\label{lemma:rpath}
			Let $G$ be a graph with maximum degree $\Delta$ and girth $g > 2r$, where $r \geq 2$. Then for any two vertices $u$, $v \in V(G)$, the number of $uv$-paths of length $r$ in $G$ is at most $1$.
		\end{lemma}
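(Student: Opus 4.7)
The plan is a standard contradiction argument: assume there exist two distinct $uv$-paths of length $r$, and extract from their union a cycle of length at most $2r$, directly contradicting the hypothesis $g > 2r$.

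Concretely, I would write $P_1 = u = x_0, x_1, \ldots, x_r = v$ and $P_2 = u = y_0, y_1, \ldots, y_r = v$, and let $i \geq 1$ be the smallest index with $x_i \neq y_i$, so $w := x_{i-1} = y_{i-1}$ is the last vertex of the common prefix. Since $x_r = v = y_r$, the paths must rejoin; pick $j \geq i$ minimal such that $x_j = y_k$ for some $k \geq i$. The closed walk
\[
w, x_i, x_{i+1}, \ldots, x_{j-1}, x_j = y_k, y_{k-1}, \ldots, y_i, w
\]
has length $(j - i + 1) + (k - i + 1) = j + k - 2i + 2 \leq 2r$, using $j, k \leq r$ and $i \geq 1$.

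The one thing that requires care is verifying that this closed walk is actually a simple cycle. The $x$-segment and the $y$-segment are subpaths of $P_1$ and $P_2$ respectively, so each is internally simple; the minimality of $j$ forces $\{x_i, \ldots, x_{j-1}\}$ to be disjoint from $\{y_i, \ldots, y_r\}$, so the two segments share only the endpoints $w$ and $x_j = y_k$; and $w$ itself lies on neither subpath's interior because $w$ precedes $x_i$ on $P_1$ and $y_i$ on $P_2$. Finally, the length is at least $3$: the alternative $j = k = i$ would give $x_i = y_i$, contradicting the choice of $i$. This produces a cycle of length in $[3, 2r]$, contradicting $g > 2r$. I do not anticipate a genuine obstacle beyond this index bookkeeping.
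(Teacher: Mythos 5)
Your proposal is correct and follows the same route as the paper: the paper's one-sentence proof simply observes that two distinct $uv$-paths of length $r$ yield a closed walk of length $2r$ and hence a cycle of length at most $2r$, and your argument is just a careful write-out of that cycle extraction. The index bookkeeping you supply is sound, so no issues.
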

		\begin{proof}
			If there are two $uv$-paths of length $r$, then their union forms a closed walk of length $2r$, which means that $G$ contains a cycle of length at most $2r$.
		\end{proof}
		
		\begin{lemma}\label{lemma:cycles1}
			Let $G$ be a graph with maximum degree $\Delta$ and girth $g > 2r$, where $r \geq 2$. Then for any edge $e \in E(G)$ and for any integer $k \geq 4$, the number of cycles of length $k$ in $G$ that contain $e$ is at most $\Delta^{k-r-1}$.
		\end{lemma}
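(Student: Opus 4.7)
The plan is to mirror the proof of Lemma \ref{lemma:cycles}, using Lemma \ref{lemma:rpath} in place of Lemma \ref{lemma:3path} and choosing the decomposition length accordingly. Write $e = uv$ and observe that each cycle of length $k$ through $e$ gives a distinct $uv$-path of length $k-1$, so it suffices to bound the number of such paths.

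First dispose of the trivial range $k \leq r$: since $g > 2r \geq 2k > k$, $G$ has no cycle of length $k$ at all, and the claimed bound $\Delta^{k-r-1} < 1$ holds vacuously (the count is $0$). So we may assume $k \geq r+1$, which in particular makes $k-1-r \geq 0$.

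For such $k$, I would decompose any $uv$-path $u = x_0$---$x_1$---$\ldots$---$x_{k-1} = v$ of length $k-1$ into an initial segment $u = x_0$---$\ldots$---$x_{k-1-r}$ of length $k-1-r$ and a trailing segment $x_{k-1-r}$---$\ldots$---$x_{k-1} = v$ of length exactly $r$. The number of paths of length $k-1-r$ starting at $u$ is crudely at most $\Delta^{k-1-r}$ (at most $\Delta$ choices at every step), and for each such path ending at some vertex $w = x_{k-1-r}$, Lemma \ref{lemma:rpath} (whose hypothesis $g > 2r$ is exactly ours) produces at most one $wv$-path of length $r$. Multiplying the two counts gives at most $\Delta^{k-1-r} \cdot 1 = \Delta^{k-r-1}$ paths of length $k-1$ from $u$ to $v$, and hence at most $\Delta^{k-r-1}$ cycles of length $k$ containing $e$.

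There is no real obstacle here; the entire content of the argument is that Lemma \ref{lemma:rpath} provides a uniqueness statement that collapses the last $r$ edges of the decomposition to a single choice, which exactly accounts for the improvement from $\Delta^{k-2-\delta}$ in Lemma \ref{lemma:cycles} to $\Delta^{k-r-1}$ here. The only thing to be careful about is the boundary case $k = r+1$, where the initial segment degenerates to the single vertex $u$; the count is then $\Delta^0 = 1$, matching both sides and consistent with Lemma \ref{lemma:rpath} applied directly to $uv$-paths of length $r$.
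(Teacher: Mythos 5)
Your proposal is correct and follows essentially the same argument as the paper: split a $uv$-path of length $k-1$ into an initial segment of length $k-r-1$ (at most $\Delta^{k-r-1}$ choices) and a final segment of length $r$ (at most one choice by Lemma~\ref{lemma:rpath}). Your explicit handling of the degenerate ranges $k \le r$ and $k = r+1$ is a small tidiness bonus the paper omits, but it does not change the substance.
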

		\begin{proof}
			Suppose that $e = uv \in E(G)$. Note that the number of cycles of length $k$ that contain $e$ is not greater than the number of $uv$-paths of length $k-1$. Consider any $uv$-path $u$---$x_1$---\ldots---$x_{k-2}$---$v$ of length $k-1$. Then $u$---$x_1$---\ldots---$x_{k-r-1}$ is a path of length $k-r-1$, and $x_{k-r-1}$---$x_{k-r}$---\ldots---$x_{k-2}$---$v$ is a path of length $r$. There are at most $\Delta^{k-r-1}$ paths of length $k-r-1$ starting at $u$, and, given a path $u$---$x_1$---\ldots---$x_{k-r-1}$, the number of $x_{k-r-1}v$-paths of length $r$ is at most $1$. Hence the number of $uv$-paths of length $k-1$ is at most $\Delta^{k-r-1}$.		
		\end{proof}
		
		\begin{lemma}\label{lemma:short}
			For every $\varepsilon > 0$, there exists a positive constant $a_\varepsilon$ such that the following holds. Let $G$ be a graph with maximum degree $\Delta$ and girth $g > 2r$, where $r \geq 2$. Then there is a proper edge coloring of $G$ using at most $(2+\varepsilon)\Delta + o(\Delta)$ colors that contains no bichromatic cycles of length at most $2L$, where $L \coloneqq a_\varepsilon (r-2)\log\Delta + 1$.
		\end{lemma}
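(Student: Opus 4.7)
The plan is to imitate the probabilistic set-up of Subsection~\ref{subsec:prob} with two key modifications. First, since we are only required to avoid \emph{short} bichromatic cycles, I redefine $A \subseteq V(Q(E(G)))$ to consist of those $S$ for which $\varphi|_{G[S]}$ is proper \emph{and} admits no $\varphi$-bichromatic cycle of length at most $2L$, and I include $f_C$ in $\widetilde Q$ only for cycles $C$ of even length at most $2L$ passing through $e$. Second, in place of the forbidden-subgraph bound (Lemma~\ref{lemma:cycles}) I use the girth bound (Lemma~\ref{lemma:cycles1}). Set $c \coloneqq \varepsilon$ and let $\varphi$ be a uniformly random $(2+c)\Delta$-edge coloring of $G$; verification that the associated $F$ is an $A$-cut is identical to the acyclic case.

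The risk estimates match those of Subsection~\ref{subsec:prob} termwise. The properness edge $f$ still contributes $\rho_\omega(f)\leq 2\omega/(2+c)$, and an edge $f_C$ corresponding to a cycle of length $2t$ still satisfies $\rho_\omega(f_C)\leq (\omega/(2+c))^{2t-2}$. Taking $\omega$ constant and setting $y \coloneqq \omega/(2+c)$, inequality~(\ref{eq:LCL}) combined with Lemma~\ref{lemma:cycles1} — and the observation that the girth hypothesis rules out every even cycle of length below $2(r+1)$ — reduces to
\begin{equation*}
c\, y \;\geq\; 1 \;+\; \Delta^{1-r}\sum_{t=r+1}^{L} y^{2t-2}.
\end{equation*}

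To satisfy this with $c = \varepsilon$, I choose $y$ slightly larger than $1/\varepsilon$, concretely $y \coloneqq (1+\varepsilon/2)/\varepsilon$, so that the left-hand side equals $1+\varepsilon/2$ and leaves a slack of $\varepsilon/2$ against the required constant $1$. Bounding the sum by a factor depending only on $\varepsilon$ times its last term and using the identity $y^{2L-2} = \Delta^{2a_\varepsilon(r-2)\log y}$, the cycle contribution is at most $O\!\left(\Delta^{1-r+2a_\varepsilon(r-2)\log y}\right)$ with an implicit constant depending only on $\varepsilon$. Choosing $a_\varepsilon$ to be any positive constant strictly less than $1/(2\log y)$ — for instance $a_\varepsilon \coloneqq 1/(4\log(2/\varepsilon))$ — makes this exponent strictly negative uniformly over all $r \geq 3$, so the cycle contribution tends to $0$ with $\Delta$ and in particular is at most $\varepsilon/2$ once $\Delta$ is large enough. (The case $r = 2$, where $L=1$ and the statement reduces to the existence of a proper edge coloring, and the case where the girth already exceeds $2L$ so that no short even cycles exist at all, are trivial.) The LCL together with Corollary~\ref{corl:positive} then yields the desired coloring with positive probability. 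The main obstacle is precisely the two-sided balance just described: $y$ must be large enough that $1/y \leq \varepsilon$, yet the exponential-in-$L$ factor $y^{2L-2}$ must be absorbed by the polynomial factor $\Delta^{r-1}$; fixing $a_\varepsilon$ inversely proportional to $\log(1/\varepsilon)$ is what makes this absorption work uniformly across all admissible $r$.
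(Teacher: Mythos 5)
Your proposal is correct and follows essentially the same route as the paper: the same LCL set-up with $A$ redefined to forbid only short bichromatic cycles, Lemma~\ref{lemma:cycles1} in place of Lemma~\ref{lemma:cycles}, the same reduction to $cy \geq 1 + \Delta^{1-r}\sum_{t=r+1}^{L} y^{2t-2}$, and a choice of $y \approx 1/\varepsilon$ and $a_\varepsilon \asymp 1/\log(1/\varepsilon)$ matching the paper's $y = 2/\varepsilon$, $a_\varepsilon = (2\log(2/\varepsilon))^{-1}$. The only (immaterial) difference is that you bound the geometric sum by a constant times its last term, where the paper uses the cruder bound $(L-r)y^{2L-3} \leq \Delta y^{2L-3}$ and absorbs the extra factor of $\Delta$ into the choice of $a_\varepsilon$.
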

		\begin{proof}
			We work in a probabilistic setting similar to the one used in the proof of Theorem \ref{theo:no4} (see Subsection \ref{subsec:prob} for the notation used), but this time
			$$
				S \in A \,\vcentcolon\Longleftrightarrow\, \text{$\varphi$ is a proper edge coloring of $G[S]$ with no bichromatic cycles of length at most $2L$}.
			$$			
			Then, taking into account Lemma \ref{lemma:cycles1}, (\ref{eq:acyc}) turns into
			\begin{align}\label{eq:short}
				\omega \geq 1 + \sum_{t = r+1}^L \frac{\Delta^{2t-r-1} \omega^{2t-2}}{((2+c)\Delta)^{2t-2}} + \frac{2\omega}{2+c} = 1 + \Delta^{-r+1} \sum_{t = r+1}^L \left(\frac{\omega}{2+c}\right)^{2t-2} + \frac{2\omega}{2+c}.
			\end{align}
			If $y \coloneqq \omega/(2+c)$, then (\ref{eq:short}) becomes
			\begin{align*}
				c \geq \frac{1}{y} + \Delta^{-r+1} \sum_{t=r+1}^L y^{2t-3}.
			\end{align*}
			Note that if $y > 1$ and $L\leq \Delta$, we have
			\begin{align*}
				\sum_{t=r+1}^L y^{2t-3} \leq \sum_{t=r+1}^L y^{2L-3} = (L-r) y^{2L-3} \leq \Delta y^{2L-3},
			\end{align*}
			so it is enough to get
			\begin{align*}
				c \geq \frac{1}{y} + \Delta^{-r+2} y^{2L-3}.
			\end{align*}
			Now take $y = 2/\varepsilon$ and $c = \varepsilon$. We need
			\begin{align*}
				\frac{\varepsilon}{2} \geq \Delta^{-r+2} \left(\frac{2}{\varepsilon}\right)^{2L-3},
			\end{align*}
			i.e.
			\begin{align*}
				L \leq \left(2\log\frac{2}{\varepsilon}\right)^{-1} (r-2)\log\Delta + 1,
			\end{align*}
			and we are done.
		\end{proof}
		
		\subsection{Breaking long cycles}
		
		To deal with ``long'' cycles we need a different random procedure. A similar procedure was analysed in \cite{Muthu} using the LLL.
		
		\begin{lemma}\label{lemma:long}
			For every $\varepsilon > 0$, there exist positive constants $b_\varepsilon$ and $d_\varepsilon$ such that the following holds. Let $G$ be a graph with maximum degree $\Delta$ and let $\psi : E \to \mathscr{C}$ be a proper edge coloring of $G$. Then there is a proper edge coloring $\varphi:E \to \mathscr{C}\cup\mathscr{C}'$ such that
			\begin{itemize}
				\item $|\mathscr{C}'| = \varepsilon \Delta + o(\Delta)$;
				\item if a cycle is $\varphi$-bichromatic, then it was $\psi$-bichromatic;
				\item there are no $\varphi$-bichromatic cycles of length at least $L$, where $L \coloneqq b_\varepsilon \log\Delta + d_\varepsilon$.
			\end{itemize}
		\end{lemma}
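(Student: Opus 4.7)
The plan is to apply the Local Cut Lemma to a randomized recoloring of $\psi$, in the same spirit as the proof in Subsection~\ref{subsec:prob}. Fix a palette $\mathscr{C}'$ disjoint from $\mathscr{C}$ with $|\mathscr{C}'| = \lceil\varepsilon\Delta\rceil$ and a small parameter $p = p(\varepsilon) \in (0,1)$; independently for each edge $e \in E(G)$, set $\varphi(e) = \psi(e)$ with probability $1-p$, and otherwise draw $\varphi(e)$ uniformly from $\mathscr{C}'$. Taking $Q = Q(E(G))$, declare
$$
S \in A \iff \varphi|_S \text{ is proper and every $\varphi$-bichromatic cycle in $G[S]$ is $\psi$-bichromatic of length $< L$,}
$$
where $L = b_\varepsilon \log\Delta + d_\varepsilon$ is to be chosen. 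Extend $Q$ to $\widetilde{Q}$ by attaching, to each arc $(S\cup\{e\}, S) \in E(Q)$, one edge $f_{e'}$ for each $e' \in N_G(e)$ (recording $\varphi(e) = \varphi(e')$) and one edge $f_C$ for each even cycle $C \ni e$ (recording that $C$ violates the second clause defining $A$); this $F$ is an $A$-cut.

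For the properness edges, choose the witness $z = S \setminus \{e'\}$, which frees both $\varphi(e)$ and $\varphi(e')$ from conditioning on $\{z \in A\}$. The only way to produce $\varphi(e) = \varphi(e')$ is for both edges to be recolored to the same $\mathscr{C}'$-color (the original colors $\psi(e) \neq \psi(e')$ belong to $\mathscr{C}$, which is disjoint from $\mathscr{C}'$), giving $\Pr(\varphi(e) = \varphi(e')) = p^2/|\mathscr{C}'|$. With $\underline\omega(S\cup\{e\}, z) \le \omega^2$, one obtains $\rho_\omega(f_{e'}) \le \omega^2 p^2/|\mathscr{C}'|$, which summed over the $\le 2\Delta$ neighbors yields total properness contribution $O(\omega^2 p^2/\varepsilon)$. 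For a cycle $C$ of length $2t$, take $z = S \setminus \{e_1,\ldots,e_{2t-2}\}$: conditioning on $\varphi(e_{2t-1})$ and $\varphi(e_{2t})$, each of the remaining $2t-2$ edges must independently take a specific value, each factor being at most $\max(1-p,\,p/|\mathscr{C}'|) = 1-p$. Tracking the two cases (old color in $\mathscr{C}$ versus new color in $\mathscr{C}'$) for each side separately, this gives $\Pr(f_C \in F \mid z \in A) \le (1-p)^{2t-2}$ for $\psi$-bichromatic $C$, $(p(1-p)/|\mathscr{C}'|)^{t-1}$ for semi-bichromatic $C$ (exactly one of odd/even positions has constant $\psi$-color), and $(p/|\mathscr{C}'|)^{2t-2}$ for generic $C$.

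The critical technical step is a counting bound: the number of semi-bichromatic cycles of length $2t$ through a fixed edge $e$ is only $O(\Delta^{t-1})$, not the naive $O(\Delta^t)$. Indeed, when the constant side (say even) has $\psi$-color $c'$, the even edges of $C$ are determined (at most one $c'$-edge per vertex by properness of $\psi$) and the odd edges (other than $e$) are ``free'' walks; crucially, the requirement that the final forced $c'$-edge return to the starting endpoint of $e$ pins down the penultimate free edge, trimming one factor of $\Delta$. Summing over the at most $\Delta - 1$ choices of $c'$ incident to $e$'s endpoint yields $O(\Delta^{t-1})$. Combined with the standard observation that $\psi$-bichromatic cycles through $e$ are determined by the ``other'' color and therefore number at most $\Delta$, and with the trivial $O(\Delta^{2t-2})$ count of generic cycles, the cycle contribution to the LCL sum splits into (i) $O(\Delta (\omega(1-p))^{L-2})$ from long $\psi$-bichromatic cycles, made $o(1)$ by choosing $L$ of order $\log\Delta/p$; (ii) a geometric series in $\omega^2 p(1-p)/\varepsilon$ from semi-bichromatic cycles, summing to $O(\omega^2 p(1-p)/\varepsilon)$; and (iii) an even smaller series from generic cycles, bounded by $O(\omega^2 p^2/\varepsilon^2)$.

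Collecting, the LCL hypothesis becomes $\omega \ge 1 + O(\omega^2 p^2/\varepsilon) + O(\omega^2 p/\varepsilon) + o(1)$, which is satisfiable by a constant $\omega = \omega(\varepsilon) > 1$ once $p = p(\varepsilon)$ is chosen with $p/\varepsilon$ small enough (but $p$ itself bounded below by a constant in $\varepsilon$, so that $L = \Theta_\varepsilon(\log\Delta)$). Since $\Pr(\emptyset \in A) = 1$, Corollary~\ref{corl:positive} then gives $\Pr(E(G) \in A) > 0$, producing the desired $\varphi$, with $L = b_\varepsilon \log\Delta + d_\varepsilon$ for suitable $b_\varepsilon, d_\varepsilon$. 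The main obstacle is the sharp $O(\Delta^{t-1})$ enumeration of semi-bichromatic cycles together with the balancing between properness (which wants $\omega$ pushed above $1$) and long-cycle decay (which constrains $\omega(1-p) < 1$); without the closing-constraint savings in the count, new short bichromatic cycles created by the recoloring would overwhelm the LCL sum and defeat the approach.
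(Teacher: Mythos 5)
Your overall strategy is the paper's: the same random partial recoloring, the same hypercube digraph and $A$-cut, the same classification of potentially bichromatic cycles into ``all colors kept'', ``all colors new'', and the two ``semi'' types, and, crucially, the same $O(\Delta^{t-1})$ count of semi-bichromatic $2t$-cycles through a fixed edge (your closing-constraint argument is exactly the paper's). However, there is a genuine quantitative gap in your risk estimate for the semi-bichromatic cycles, and with your bounds the final inequality is \emph{not} satisfiable for small $\varepsilon$. You take the witness $z=S\setminus\{e_1,\dots,e_{2t-2}\}$, which gives $\Pr(f_C\in F\mid z\in A)\le\bigl(p(1-p)/|\mathscr{C}'|\bigr)^{t-1}$ with weight $\omega^{2t-2}$, so the $t=2$ term of your semi-bichromatic series is $\Theta\bigl(p(1-p)\omega^{2}/\varepsilon\bigr)$, i.e.\ $\Theta(p/\varepsilon)$ when $p$ is small. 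But the long-cycle term forces $(1-p)\omega<1$, hence $\omega-1<p/(1-p)$, and the properness term forces $2p^{2}\omega^{2}/\varepsilon\le\omega-1$; combining these gives $p(1-p)<\varepsilon/2$, so either $p\lesssim\varepsilon$ or $p\ge 1-\varepsilon$. In the first case $\omega-1\lesssim p$ cannot dominate a semi term of order $p/\varepsilon$ unless $\varepsilon$ is bounded below by an absolute constant; in the second case the properness term alone is at least $\omega^{2}/\varepsilon>\omega-1$. So the system you write down, $\omega\ge 1+O(\omega^{2}p^{2}/\varepsilon)+O(\omega^{2}p/\varepsilon)+o(1)$ together with $(1-p)\omega$ bounded away from $1$, has no solution for small $\varepsilon$, and the closing claim that it is ``satisfiable once $p/\varepsilon$ is small enough'' is false.

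The repair is the paper's choice of witness: condition on $S\setminus C\in A$, removing the \emph{entire} cycle. Then the last odd edge must also be kept (an extra factor $1-p$) and the last even edge must also be recolored (an extra factor $p$, the common new color remaining free), so the per-cycle bound improves to $(1-p)^{t}p^{t}\omega^{2t}/|\mathscr{C}'|^{t-1}$ --- smaller than yours by the factor $p(1-p)\omega^{2}\approx p$. After absorbing $((1-p)\omega)^{t}\le 1$, the semi contribution becomes $c\,y^{2}/(1-y)$ with $y=p\omega/c$, i.e.\ $\Theta(p^{2}/\varepsilon)$, which \emph{is} dominated by $\omega-1=\Theta(p)$ once $p\lesssim\varepsilon$; the paper takes $c=\varepsilon$ and $y=\varepsilon$ (so $p\approx\varepsilon^{2}$, $\omega=1+\varepsilon^{2}/2$) and everything closes. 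Your counting lemmas and the treatment of the other cut-edge types are fine; only this choice of $z$ needs to change.
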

		\begin{proof}
			Let $\mathscr{C}'$ be a set of colors disjoint from $\mathscr{C}$ with $|\mathscr{C}'| = c\Delta$. Fix some $0<p<1$ and construct a random edge coloring $\varphi$ in the following way: For each edge $e \in E(G)$ either do not change its color with probability $1-p$, or choose for it one of the new colors, each with probability $p/|\mathscr{C}'| = p/(c\Delta)$.
			
			Now consider the hypercube digraph $Q \coloneqq Q(E(G))$. Define a set $A \subseteq V(Q)$ by
			$$
				S \in A \,\vcentcolon\Longleftrightarrow\, \text{$\varphi|_S$ satisfies the conditions of the lemma},
			$$
			where $\varphi|_S$ denotes the restriction of $\varphi$ to $G[S]$. The set $A$ is out-closed, $\emptyset \in A$ with probability $1$, and $E(G) \in A$ if and only if $\varphi$ satisfies the conditions of the lemma.
			
			Let $\widetilde{Q}$ be the digraph such that $\widetilde{Q}^s = Q$ and for each $(S \cup \{e\}, S) \in E(Q)$, the edges of $\widetilde{Q}$ between $S \cup \{e\}$ and $S$ are of the following five types:
			\begin{enumerate}
				\item $f_{e'}$ for each edge $e'$ adjacent to $e$;
				\item $f^{\text{II}}_C$ for each $\psi$-bichromatic cycle $C \ni e$ of length at least $L$;
				\item $f^{\text{III}}_C$ for each cycle $C \ni e$ of even length;
				\item $f^{\text{IV}}_C$ for each cycle $C \ni e$ of even length such that $C = e_1$, $e_2$, \ldots, $e_{2t}$ with $e_1 = e$ and $\psi(e_1) = \psi(e_3) = \ldots = \psi(e_{2t-1})$;
				\item $f^{\text{V}}_C$ for each cycle $C \ni e$ of even length such that $C = e_1$, $e_2$, \ldots, $e_{2t}$ with $e_1 = e$ and $\psi(e_2) = \psi(e_4) = \ldots = \psi(e_{2t})$.
			\end{enumerate}
			
			Now define
			$$
				f_{e'} \in F \,\vcentcolon\Longleftrightarrow\, \text{$\varphi(e) = \varphi(e') \in \mathscr{C}'$};
			$$
			$$
				f^\text{II}_C \in F \,\vcentcolon\Longleftrightarrow\,\text{$\varphi(e') = \psi(e')$ for all $e' \in C$};
			$$
			$$
				f^\text{III}_C \in F \,\vcentcolon\Longleftrightarrow\, \text{$C$ is $\varphi$-bichromatic and $\varphi(e') \in \mathscr{C}'$ for all $e' \in C$};
			$$
			$$
				f^\text{IV}_C \in F \,\vcentcolon\Longleftrightarrow\, \text{$C$ is $\varphi$-bichromatic and $\varphi(e_{2k-1}) = \psi(e_{2k-1})$, $\varphi(e_{2k}) \in \mathscr{C}'$ for all $1 \leq k \leq t$};
			$$
			$$
				f^\text{V}_C \in F \,\vcentcolon\Longleftrightarrow\, \text{$C$ is $\varphi$-bichromatic and $\varphi(e_{2k-1}) \in \mathscr{C}'$, $\varphi(e_{2k}) = \psi(e_{2k})$ for all $1 \leq k \leq t$}.
			$$
			It is easy to see that the situations described above exhaust all possible circumstances under which $S \cup \{e\} \not \in A$, even though $S \in A$. Therefore, $F$ is an $A$-cut in $\widetilde{Q}$.
			
			Let us proceed to estimate the risks to the edges of different types. From now on, we assume that $\omega \in \mathbb{R}_{\geq 1}$ is a constant.
			
			{\sc Type 1}. We have
			\begin{align*}
				\Pr(f_{e'} \in F\vert S \setminus \{e'\} \in A) \,=\,\Pr(\varphi(e) = \varphi(e') \in \mathscr{C}' \vert S \setminus \{e'\} \in A) \,\leq\, \frac{p^2}{c\Delta}.
			\end{align*}
			Also,
			$$
				\underline{\omega}(S \cup \{e\}, S \setminus \{e'\}) \leq \omega^2,
			$$
			so
			$$
				\rho_\omega(f_{e'}) \leq \rho_\omega(f_{e'}, S \setminus \{e'\}) \leq \frac{p^2\omega^2}{c\Delta}.
			$$
			Since there are less than $2\Delta$ edges adjacent to any given edge $e$, the edges of the first type contribute at most
			\begin{align}\label{eq:case1}
				2\Delta \cdot \frac{p^2\omega^2}{c\Delta}= 2c \left(\frac{p\omega}{c}\right)^2
			\end{align}
			to the right-hand side of (\ref{eq:LCL}).
			
			{\sc Type 2}. We have
			\begin{align*}
				\Pr\left(f^\text{II}_C \in F \middle\vert S\setminus C \in A\right) \,=\, \Pr\left(\text{$\varphi(e') = \psi(e')$ for all $e' \in C$}\,\middle\vert S \setminus C \in A\right)\,\leq\, (1-p)^{|C|}.
			\end{align*}
			Also,
			$$
				\underline{\omega}(S \cup \{e\}, S \setminus C) \leq \omega^{|C|},
			$$
			so
			$$
				\rho_\omega(f^\text{II}_C) \leq \rho_\omega(f^\text{II}_C, S\setminus C) \leq (1-p)^{|C|}\omega^{|C|}.
			$$
			If we further assume that $(1-p)\omega < 1$, then
			$$
				\rho_\omega(f^\text{II}_C) \leq ((1-p)\omega)^L.
			$$
			Finally, note that there are less than $\Delta$ cycles that contain a given edge $e$ and are $\psi$-bichromatic (because the second edge on such a cycle determines it uniquely). Therefore, the edges of this type contribute at most
			\begin{align}\label{eq:case2}
			\Delta ((1-p)\omega)^L
			\end{align}
			to the right-hand side of (\ref{eq:LCL}).
			
			{\sc Type 3}. We have
			\begin{align*}
				\Pr\left(f^\text{III}_C \in F\middle\vert S \setminus C \in A\right) &= \Pr\left(\text{$C$ is $\varphi$-bichromatic and $\varphi(e') \in \mathscr{C}'$ for all $e' \in C$}\,\middle\vert S \setminus C \in A\right) \\
				&\leq \frac{p^{|C|}}{(c\Delta)^{|C|-2}}.
			\end{align*}
			Also,
			$$
				\underline{\omega}(S \cup \{e\}, S \setminus C) \leq \omega^{|C|},
			$$
			so
			$$
				\rho_\omega(f^\text{III}_C) \leq \rho_\omega(f^\text{III}_C, S \setminus C) \leq \frac{p^{|C|}\omega^{|C|}}{(c\Delta)^{|C|-2}}.
			$$
			There can be at most $\Delta^{2t-2}$ cycles of length $2t$ containing a given edge $e$. Hence, if we assume that $p\omega/c < 1$, then the edges of the third type contribute at most
			\begin{align}\label{eq:case3}
				\sum_{t=2}^\infty \Delta^{2t-2} \cdot \frac{p^{2t}\omega^{2t}}{(c\Delta)^{2t-2}} = c^2 \sum_{t=2}^\infty \left(\frac{p\omega}{c}\right)^{2t} = c^2 \frac{(p\omega/c)^4}{1-(p\omega/c)^2}
			\end{align}
			to the right-hand side of (\ref{eq:LCL}).
			
			{\sc Type 4}. We have
			\begin{align*}
				\Pr\left(f^\text{IV}_C \in F\middle\vert S \setminus C \in A\right) \leq \frac{p^{|C|/2}(1-p)^{|C|/2}}{(c\Delta)^{|C|/2-1}}.
			\end{align*}
			Since
			$$
				\underline{\omega}(S \cup \{e\}, S \setminus C) \leq \omega^{|C|},
			$$
			we get
			$$
				\rho_\omega(f^\text{IV}_C) \leq \rho_\omega(f^\text{IV}_C, S \setminus C) \leq \frac{p^{|C|/2}(1-p)^{|C|/2}}{(c\Delta)^{|C|/2-1}} \cdot \omega^{|C|}.
			$$
			If we further assume that $(1-p)\omega < 1$, then
			$$
				\rho_\omega(f^\text{IV}_C) \leq \frac{p^{|C|/2}\omega^{|C|/2}}{(c\Delta)^{|C|/2-1}}.
			$$
			There can be at most $\Delta^{t-1}$ cycles $C$ of length $2t$ containing a given edge $e$ such that every second edge in $C$ is colored the same by $\psi$. (See Fig. \ref{fig:case4}. Solid edges retain their color from $\psi$ (this color must be the same for all of them). Arrows indicate $t-1$ edges that must be specified in order to fully determine the cycle). Hence, if we assume that $p\omega/c < 1$, then the edges of the fourth type contribute at most
			\begin{align}\label{eq:case4}
				\sum_{t=2}^\infty \Delta^{t-1} \cdot \frac{p^{t}\omega^t}{(c\Delta)^{t-1}} \leq c \sum_{t=2}^\infty \left(\frac{p\omega}{c}\right)^{t} = c \frac{(p\omega/c)^2}{1-p\omega/c}
			\end{align}
			to the right-hand side of (\ref{eq:LCL}).
			
			{\sc Type 5}. The same analysis as for Type 4 (see Fig. \ref{fig:case5}) shows that the contribution of the edges of this type to the right-hand side of (\ref{eq:LCL}) is at most
			\begin{align}\label{eq:case5}
				c \frac{(p\omega/c)^2}{1-p\omega/c},
			\end{align}
			provided that $(1-p)\omega < 1$ and $p\omega/c < 1$.
			
			\begin{figure}[h]
				\centering
				\begin{minipage}{.5\textwidth}
					\centering
					\includegraphics[width=.5\linewidth]{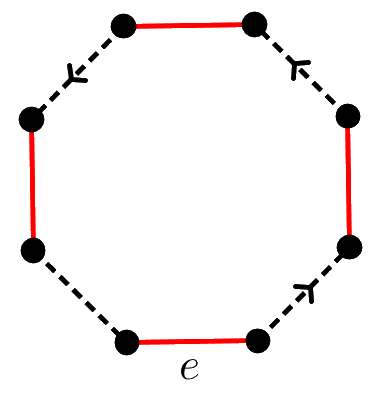}
					\caption{Type 4}
					\label{fig:case4}
				\end{minipage}%
				\begin{minipage}{.5\textwidth}
					\centering
					\includegraphics[width=.5\linewidth]{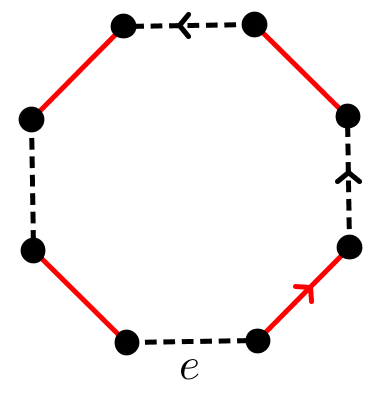}
					\caption{Type 5}
					\label{fig:case5}
				\end{minipage}
			\end{figure}
			
			Adding together (\ref{eq:case1}), (\ref{eq:case2}), (\ref{eq:case3}), (\ref{eq:case4}), and (\ref{eq:case5}), it is enough to have the following inequality:
			\begin{align}\label{eq:all}
				\omega \geq 1 + 2c \left(p\omega/c\right)^2 + \Delta ((1-p)\omega)^L + c^2 \frac{(p\omega/c)^4}{1-(p\omega/c)^2} + 2c \frac{(p\omega/c)^2}{1-p\omega/c},
			\end{align}
			under the assumptions that $(1-p)\omega < 1$ and $p\omega/c < 1$. Denote $y \coloneqq p\omega/c$. Then (\ref{eq:all}) turns into
			\begin{align*}
				\frac{c}{p} \geq \frac{1}{y} + 2c y + c^2 \frac{y^3}{1-y^2} + 2c \frac{y}{1-y} + \frac{\Delta}{y}\left(\frac{c(1-p)}{p}y\right)^L,
			\end{align*}
			and we have the conditions $y < 1$ and $y < p/(c(1-p))$. Let $c = \varepsilon$. We can assume that $\varepsilon$ satisfies
			$$
				2\varepsilon^2 + \frac{\varepsilon^5}{1-\varepsilon^2} + \frac{2\varepsilon^2}{1-\varepsilon} \leq \frac{\varepsilon}{4}.
			$$		
			Take $y = \varepsilon$. Then it is enough to have
			\begin{equation}\label{eq:subst}
				\frac{\varepsilon}{p} \geq \frac{1}{\varepsilon} + \frac{\varepsilon}{4} + \frac{\Delta}{\varepsilon}\left(\frac{\varepsilon^2(1-p)}{p}\right)^L.
			\end{equation}
			Let $p_\varepsilon \coloneqq \varepsilon/(\varepsilon/2+1/\varepsilon)$. Note that
			$$
				\frac{p_\varepsilon}{\varepsilon (1 - p_\varepsilon)} = \frac{1}{\left(\frac{\varepsilon}{2} + \frac{1}{\varepsilon}\right)\left(1-\left.\varepsilon\middle/\left(\frac{\varepsilon}{2} + \frac{1}{\varepsilon}\right)\right.\right)} = \frac{1}{\frac{1}{\varepsilon} - \frac{\varepsilon}{2}} > \varepsilon,
			$$
			so this choice of $p_\varepsilon$ does not contradict our assumptions. 
			Then (\ref{eq:subst}) becomes
			$$
				\frac{\varepsilon}{4} \geq \frac{\Delta}{\varepsilon}\left(\frac{\varepsilon^2(1-p_\varepsilon)}{p_\varepsilon}\right)^L,
			$$
			which is true provided that
			$$
				L \geq \left(\log\left(\frac{p_\varepsilon}{\varepsilon^2 (1-p_\varepsilon)}\right)\right)^{-1} \left(\log \Delta + \log \frac{4}{\varepsilon^2}\right),
			$$
			and we are done.	
		\end{proof}
		
		\subsection{Finishing the proof}
		
		To finish the proof of Theorem \ref{theo:bigg}, fix $\varepsilon > 0$. By Lemma \ref{lemma:short}, if $g(G) > 2r$, where $r \geq 2$, then there is a proper edge coloring $\psi$ of $G$ using at most $(2+\varepsilon/2)\Delta + o(\Delta)$ colors that contains no bichromatic cycles of length at most $L_1 \coloneqq 2a_{\varepsilon/2} (r-2)\log\Delta + 2$. Applying Lemma \ref{lemma:long} to this coloring gives a new coloring $\varphi$ that uses at most $(2+\varepsilon)\Delta + o(\Delta)$ colors and contains no bichromatic cycles of length at most $L_1$ (because there were no such cycles in $\psi$) and at least $L_2 \coloneqq b_{\varepsilon/2} \log\Delta + d_{\varepsilon/2}$. If $r -2 > b_{\varepsilon/2}/(2a_{\varepsilon/2})$ and $\Delta$ is large enough, then $L_1 > L_2$, and $\varphi$ must be acyclic. This observation completes the proof.
		
		\section{Concluding remarks}
		
		We conclude with some remarks on why it seems difficult to get closer to the desired bound $a'(G)\leq \Delta(G)+2$ using the same approach as in the proof of Theorem~\ref{theo:bigg}. Observe that in the proof of Theorem~\ref{theo:bigg} (specifically in the proof of Lemma~\ref{lemma:short}) we reserve $2\Delta$ colors for making a coloring proper and use only $c\Delta$ ``free'' colors to make this coloring acyclic. Essentially, Theorem~\ref{theo:bigg} asserts that $c$ can be made as small as $\varepsilon+o(1)$, provided that $g(G)$ is large enough. It means that the only way to improve the linear term in our bound is to reduce the number of reserved colors, in other words, to implement in the proof some Vizing-like argument. Unfortunately, we do not know how to prove Vizing's theorem by a relatively straightforward application of the LLL (or any analog of it). On the other hand, as was mentioned in the introduction, using a more sophisticated technique (similar to the one used by Kahn \cite{Kahn} in his celebrated proof that every graph is $(1+o(1))\Delta$-edge-choosable), Cai et al.~\cite{Cai} managed to obtain the bound $a'(G) \leq (1+\varepsilon)\Delta + o(\Delta)$, which is very close to the desired $a'(G)\leq \Delta(G)+2$.
		
		\paragraph{Acknowledgments.}
		
		I would like to thank Louis Esperet and R\'{e}mi de Verclos for the observation that the proof of Theorem~\ref{theo:no4} works not only for excluding $C_4$, but for any bipatite graph $H$ as well. I am also grateful to the anonymous referee for his or her valuable comments.
		
		This work is supported by the Illinois Distinguished Fellowship.

	\end{document}